\numberwithin{equation}{subsection}
\theoremstyle{plain}
\newtheorem{theorem}{Теорема}
\newtheorem{lemma}{Лемма}
\newtheorem{definition}{Определение}
\newtheorem{corollary}{Следствие}
\newtheorem{proposition}{Предложение}
\newtheorem{remark}{Замечание}
\newtheorem{example}{Пример}
\DeclareMathOperator{\ran}{ran}
\DeclareMathOperator{\dom}{dom}
\DeclareMathOperator{\mul}{mul}
\DeclareMathOperator{\diag}{diag}
\newcommand{\cH}{{\mathcal H}} 
\DeclareMathOperator{\Ext}{Ext}
\DeclareMathOperator{\gr}{gr}
\DeclareMathOperator{\imm}{Im}
\DeclareMathOperator{\comp}{comp}
\DeclareMathOperator{\supp}{supp}
\newcommand{\R}{{\mathbb R}}
\newcommand{\N}{{\mathbb N}}
\newcommand{\bO}{{\mathbb O}}
\newcommand{\gH}{{\mathfrak H}}
\newcommand{\I}{\mathrm{i}}
\newcommand{\rI}{\mathrm{I}}
\newcommand{\rd}{\mathrm{d}}
\newcommand{\gA}{\Lambda}
\newcommand{\be}{\begin{equation}}
\newcommand{\ee}{\end{equation}}
\newcommand{\ol}{\overline}
\begin{document}

\title[Оператор Шредингера с $\delta$-взаимодействиями]{Оператор Шредингера с $\delta$-взаимодействиями в пространстве вектор-функций}

\author[А.\,С.~Костенко]{А.\,С.~Костенко}
\address{Faculty of Mathematics\\ University of Vienna\\
Oskar-Morgenstern-Platz 1\\ 1090 Wien\\ Austria}
\email{{duzer80@gmail.com};{Oleksiy.Kostenko@univie.ac.at}}
\urladdr{\url{http://www.mat.univie.ac.at/~kostenko/}}

\author[М.\,М.~Маламуд]{М.\,М.~Маламуд}
\address{ИПММ НАНУ,
г.~Славянск} \email{mmm@telenet.dn.ua}

\author[Д.\,Д.~Натягайло]{Д.\,Д.~Натягайло}
\address{ИПММ, г.~Донецк} \email{delthink@mail.ru}

\thanks{Работа Костенко А. С. выполнена при финансовой
поддержке Австрийского научного фонда (FWF), no.~P 26060.}

\thanks{\it Матем.\ Заметки (в печати)}

\begin{abstract}
Изучается матричный оператор Шредингера с  точечными взаимодействиями на полуоси. Используя теорию граничных троек и соответствующих им функций Вейля, мы устанавливаем связь между спектральными свойствами (индексы дефекта, самосопряженность, полуограниченность, дискретность спектра и т.д.) исследуемых операторов и некоторого класса блочных якобиевых матриц. 
\end{abstract}

\maketitle

\subsection{Введение}

В настоящей статье мы рассматриваем матричный оператор Шредингера с  точечными взаимодействиями. Именно, изучается оператор $H_{X,\gA}$, ассоциированный в пространстве вектор-функций $L^2(\R_+;\mathbb{C}^p)$ с формальным дифференциальным выражением
\begin{equation}\label{deltaShr}
l_{X,\gA}:= -\frac{\rd^2}{\rd x^2}\otimes \rI_p +\sum_{n=1}^{\infty}\gA_n\delta(x-x_n),\quad x\in \R_+.
\end{equation}  
Здесь $\gA_k = \gA_k^* \in \mathbb{C}^{p\times p}$ для всех $k\in\N$, $X:=\{x_k\}_{k=1}^\infty$ -- строго возрастающая последовательность, $\lim_{k\to \infty} x_k = \infty$, а $\delta$ -- функция Дирака. 
Наша основная цель -- исследование связи между спектральными свойствами операторов $H_{X,\gA}$ и  блочных якобиевых матриц вида \eqref{B1} и \eqref{B2}. 

Литература об операторах Шредингера с матрично-значными потенциалами обширна, и в качестве основного источника информации и дальнейших ссылок мы лишь укажем \cite{Gorb84, MalLes03, Rofe} (в частности, в этих монографиях изучается случай операторно-значных потенциалов). В последнее время возрос интерес к операторам Шредингера, или более общо к операторам Штурма--Лиувилля с коэффициентами-распределениями, что, в свою очередь, продиктовано различными вопросами математической физики. Отметим лишь недавние работы \cite{SavShk03, MirSaf111, MirSaf11, MirSaf16, Mir14, GesTes13, egnt14}, содержащие, в частности, достаточно подробную библиографию.  Скалярные операторы, задаваемые выражениями \eqref{deltaShr}, впервые возникли в квантовой механике как точно решаемые модели (см. \cite{aghh}), и их спектральные свойства достаточно хорошо изучены (см., например, \cite{aghh,  KosMal101, KosMal10, akm, KosMal13}). Одна из основных целей данной работы -- обобщить результаты из \cite{KosMal101, KosMal10} на случай $p>1$. А именно, мы показываем, что в случае
\be
\sup_{k\in\N} d_k <\infty,\qquad d_k:=x_{k} - x_{k-1},\quad k\in\N,
\ee
спектральные свойства минимального оператора $H_{X,\gA}$, порождаемого выражением \eqref{deltaShr} в $L^2(\R_+;\mathbb{C}^p)$ (см. точное определение в разделе \ref{sec:defs}), тесно связаны со спектральными свойствами минимального оператора, задаваемого в $\ell^2(\N;\mathbb{C}^p)$ блочной матрицей Якоби 
     \begin{equation}\label{eq:B2}
    B_{X,\gA}=\begin{pmatrix}
    \frac{1}{d_1d_2} \rI_p + \frac{1}{d_1+d_2}\gA_1 & \frac{-1}{r_1r_2d_2}\rI_p& \bO_p&   \dots\\[2mm]
    \frac{-1}{r_1r_2d_2}\rI_p &  \frac{1}{d_2d_3} \rI_p + \frac{1}{d_2+d_3}\gA_2& \frac{-1}{r_2r_3d_3}\rI_p &  \dots\\[2mm]
    \bO_p & \frac{-1}{r_2r_3d_3}\rI_p &   \frac{1}{d_3d_4} \rI_p + \frac{1}{d_3+d_4}\gA_3 &  \dots\\
    \dots & \dots& \dots & \dots
    \end{pmatrix}.
         \end{equation}
 В частности, мы усиливаем некоторые результаты из \cite{MirSaf111, MirSaf11} об индексах дефекта операторов \eqref{deltaShr}.  

В заключение кратко опишем содержание работы. Мы изучаем оператор $H_{X,\gA}$ в рамках теории расширений симметрических операторов. А именно, мы рассматриваем $H_{X,\gA}$ как расширение некоторого симметрического оператора с бесконечными индексами дефекта (см. \eqref{minop}) и затем применяем теорию граничных троек и соответствующих им функций Вейля (см., например, \cite{DM951, Mal921}). Поэтому в разделе \ref{sec:pre} мы приводим основные определения и факты, необходимые нам в дальнейшем. Строгое определение оператора $H_{X,\gA}$ содержится в разделе \ref{sec:defs}.
Затем в разделе \ref{sec:BT} мы строим граничные тройки для минимального оператора \eqref{minop}, а также указываем граничные операторы, параметризующие \eqref{deltaShr} в этих граничных тройках. Данные результаты являются ключевыми при установлении связи между спектральными свойствами операторов \eqref{deltaShr} и \eqref{eq:B2}. Так, в разделе \ref{sec:defind} (см. Теорему \ref{MK1}) мы доказываем, что их индексы дефекта совпадают, $n_\pm(H_{X,\gA}) = n_\pm(B_{X,\gA})$. Применяя результаты из теории блочных матриц Якоби (см., например, \cite{Akh, Ber65, KosMir98, KosMir99, KosMir01}), мы указываем различные достаточные условия для справедливости равенств $n_\pm(H_{X,\gA}) = p_1$ с произвольным $p_1 \in\{0,..., p\}$. В заключительном разделе \ref{sec:spt} доказывается Теорема \ref{MK}, устанавливающая тесную связь между спектральными свойствами операторов $H_{X,\gA}$ и якобиевых матриц $B_{X,\gA}$. Так, они одновременно полуограничены, неотрицательны, имеют дискретный (или отрицательный дискретный) спектр.

\textbf{Обозначения.} 
$\N$, $\R$ и $\mathbb{C}$ имеют стандартное значение; $\mathbb{R}_{+} = (0,\infty)$, $\mathbb{R}_- = (-\infty,0)$, и $\mathbb{C}_{\pm} = \{z\in\mathbb{C}: \pm\imm z>0\}$.

$\mathfrak{H}$ и $\mathcal{H}$ -- сепарабельные гильбертовы пространства. $\rI_{\gH}$ и $\bO_{\gH}$ -- единичный и нулевой операторы в $\gH$, соответственно; $\rI_p = \rI_{\mathbb{C}^p}$ и $\bO_p = \bO_{\mathbb{C}^p}$. $[\mathfrak{H},\mathcal{H}]$ -- множество ограниченных операторов из $\mathfrak{H}$ в $\mathcal{H}$; $[\mathfrak{H}]=[\mathfrak{H},\mathfrak{H}]$; ${\mathfrak S}_p(\mathfrak{H})$ -- двусторонние идеалы  Неймана--Шаттена в $[\mathfrak{H}]$, $p\in(0,\infty]$. 
Далее, 
$\ell^2(\N;\cH) = \ell^2(\mathbb{N})\otimes\mathcal{H}$ -- гильбертово пространство $\mathcal{H}$-значных последовательностей $f=\{f_k\}_{k=1}^\infty$, таких что $\|f\|^2=\sum_{k=1}^{\infty}\|f_k\|^2_\mathcal{H}$;
$\ell^2_{0}(\N;\cH) = \ell^2_0(\mathbb{N})\otimes\mathcal{H}$ -- подмножество финитных последовательностей из $\ell^2(\mathbb{N};\cH)$.
Также нам понадобятся следующие соболевские пространства: 
\begin{align*}
W^{2,2}(\mathbb{R}_+\setminus X; \mathbb{C}^p)&:=\bigoplus_{k=1}^{\infty}W^{2,2}([x_{k-1},x_k];\mathbb{C}^p),\\
W^{2,2}_{\comp}(\mathbb{R}_+\setminus X; \mathbb{C}^p)&:=\big\{f\in W^{2,2}(\mathbb{R}_+\setminus X; \mathbb{C}^p):\, \supp(f)\ \text{компактен в}\ [0,\infty)\big\}.
\end{align*} 

\subsection{Предварительные сведения}\label{sec:pre}

\subsubsection{Линейные отношения}

Пусть $\mathfrak{H}$ -- гильбертово пространство. {\it Линейным отношением} в $\mathfrak{H}$ называется линейное подпространство в $\mathfrak{H}\times \mathfrak{H}$. Множество  {\it замкнутых линейных отношений} в $\mathfrak{H}$ обозначим $\widetilde{\mathcal{C}}(\mathfrak{H})$. Линейный оператор $T$ в $\mathfrak{H}$ отождествляют с его графиком $\gr T$, поэтому множество $\mathcal{C}(\mathfrak{H})$  замкнутых линейных операторов в $\mathfrak{H}$ отождествляется с подмножеством $\widetilde{\mathcal{C}}(\mathfrak{H})$.

Напомним, что $\dom( \Theta) =\bigl\{f:\{f,f'\}\in\Theta\bigr\} $, $\ran(\Theta) =\bigl\{
f^\prime:\{f,f'\}\in\Theta\bigr\} $, $\ker (\Theta) = \{f:\{f,0\} \in \Theta\}$ и $\mul(\Theta) =\bigl\{
f^\prime:\{0,f'\}\in\Theta\bigr\} $ называют {\it областью определения, множеством значений, ядром} и $\textit{многозначной частью}$ линейного отношения $\Theta$, соответственно. Сопряженное линейное отношение определяется как
 \[
 \Theta^{*} := \big\{\{g,g'\}:\, (f',g) = (f,g')\ ё \forall \{f,f'\}\in\Theta\big\}.
 \]  
Линейное отношение $\Theta$ называется {\it симметрическим} ({\it самосопряженным}), если
$\Theta\subseteq\Theta^*$ ($\Theta=\Theta^*$). Отметим, что многозначная часть $\mul(\Theta)$ симметрического линейного отношения $\Theta$ ортогональна его области определения
 $\dom(\Theta)$. Полагая $\mathfrak{H}_{\rm op}:=\overline{\dom( \Theta)}$, 
 получаем ортогональное разложение  $\Theta= \Theta_{\rm op}\oplus \Theta_\infty$, где  $\Theta_{\rm op}$ -- симметрический оператор в $\mathfrak{H}_{\rm op}$, который называется {\it операторной частью} линейного отношения $\Theta$, а
$\Theta_\infty := \{0\}\times \mul(\Theta)$.

\subsubsection{Граничные тройки и функция Вейля}

Всюду в дальнейшем будем считать, что $A$ -- плотно заданный замкнутый симметрический оператор в $\mathfrak{H}$, 
$\mathfrak{N}_{z}(A):=\mathfrak{H}\ominus\ran(A - \ol{z})=\ker(A^* - z)$, $z\in\mathbb{C}$
 -- его {\it дефектные подпространства}, а $n_{\pm}(A):=\dim \mathfrak{N}_{\pm \I}(A)$ -- его {\it индексы дефекта}.
 Также будем считать, что $n_+(A)=n_-(A)\le \infty$.

\begin{definition}[\cite{DM951, Gorb84}]\label{Bound TriplDef}
 Совокупность $\Pi = \{\mathcal{H}, \Gamma_0, \Gamma_1\}$, в которой $\mathcal{H}$ --  гильбертово пространство, а $\Gamma_0$ и $\Gamma_1$  -- линейные отображения из $\dom( A^*)$ в $\mathcal{H}$,  называется граничной тройкой оператора $A^*$, если:
 \begin{itemize}
 \item[(i)] справедливо тождество Грина
 \begin{equation}\label{GrIdent}
  (A^*f,g)-(f,A^*g)=(\Gamma_1{f}, \Gamma_0{g})_{\mathcal{H}}-(\Gamma_0{f}, \Gamma_1{g})_{\mathcal{H}},\quad f, g \in \dom(A^*) ;\end{equation}
 \item[(ii)] отображение $\Gamma:{f} \mapsto \{\Gamma_0{f}, \Gamma_1{f}\}$ из $\dom( A^*)$ в $\mathcal{H}\times \mathcal{H}$ сюръективно.
 \end{itemize}
 \end{definition}
 
 Граничная тройка для оператора $A^*$ существует, только когда $n_+(A)=n_-(A)$. В этом случае
 $n_\pm(A) = \dim \mathcal H$ и $\ker(\Gamma) = \ker(\Gamma_0) \cap \ker(\Gamma_1)= \dom( A)$. 
   
Расширение $\widetilde{A}$ оператора $A$ называют {\it собственным}, если $\dom(A) \subset \dom(\widetilde{A}) \subset \dom(A^*)$. Множество всех собственных (не обязательно замкнутых) расширений оператора $A$ обозначают $\Ext_{A}$.
 
 \begin{proposition}\cite{DM951}\label{L:3.2}
     Пусть $\Pi = \{\mathcal{H}, \Gamma_0,\Gamma_1\}$ -- граничная тройка для оператора $A^*$. Тогда отображение $\Gamma=\{\Gamma_0,\Gamma_1\}:\dom( A^*)\to{\mathcal H}\times {\mathcal H}$ задает
     биективное соответствие между  $\Ext_{A}$  и множеством всех линейных отношений в  $\cH$ следующим образом  
\be\label{eq:ATheta}
\Theta \mapsto A_\Theta:= A^*\upharpoonright_{ \{f\in \dom(A^*):\ \{\Gamma_0f,\Gamma_1f\}\in \Theta\}}.
\ee
      При этом справедливы соотношения:
       \begin{itemize}
         \item [(i)] $A_{\Theta}^*=A_{\Theta^*}$;
         \item[(ii)] $A_{\Theta} \in \mathcal{C}(\gH)$ в точности тогда, когда $\Theta \in \widetilde{\mathcal{C}}(\cH)$;
         \item [(iii)] $A_{\Theta_1}\subseteq A_{\Theta_2}$ в точности тогда, когда $\Theta_1 \subseteq \Theta_2$;
         \item [(iv)] $A_{\Theta}$ -- симметрическое (самосопряженное) расширение в точности тогда, когда линейное отношение $\Theta$ симметрическое (самосопряженное). В частности, $n_{\pm}(A_{\Theta}) = n_{\pm}(\Theta)$;      
        \item[(v)] Пусть $A_{\Theta}=A_{\Theta}^*$ и $A_{\widetilde{\Theta}}=A_{\widetilde{\Theta}}^*$. Тогда для каждого $p\in (0,+\infty]$ справедлива эквивалентность
\[
          (A_{\Theta}-\I)^{-1} - (A_{\widetilde{\Theta}}-\I)^{-1}\in{\mathfrak S}_p(\mathfrak{H}) \ 
\Longleftrightarrow \          
                  (\Theta-\I)^{-1} - (\widetilde{\Theta}-\I)^{-1}\in{\mathfrak S}_p(\mathcal{H}).
\]
      Если к тому же $\dom(\Theta) = \dom(\widetilde{\Theta})$, то справедлива импликация 
\[
         \overline{\Theta - \widetilde{\Theta}} \in \mathfrak{S}_p(\mathcal{H})\ \Longrightarrow \ 
         ({A}_{\Theta}-\I)^{-1} - ({A}_{\widetilde{\Theta}}-\I )^{-1}\in {\mathfrak S}_p(\mathfrak{H}).
\]
                   \end{itemize}
     \end{proposition}

Отношение $\Theta$ называют {\it граничным отношением} оператора $\widetilde{A}=A_{\Theta}$. Если $\Theta$ является графиком линейного оператора $B$, $\Theta =\gr B$, то область определения расширения $A_{\Theta}=A_B$ описывается следующим образом: 
\be\label{eq:A_B}
\dom (A_B)=\{f\in\dom( A^*): \Gamma_1f=B\Gamma_0f\} = A^*\upharpoonright_{\ker(\Gamma_1 - B\Gamma_0)}.
\ee
При этом оператор $B$ называют {\it граничным оператором} расширения $A_{B}$.

С каждой граничной тройкой естественным образом связаны два расширения
\[
A_0:=A^*\upharpoonright_{\ker(\Gamma_0)},\qquad A_1:=A^*\upharpoonright_{\ker(\Gamma_1)}.
\]
Очевидно, что $A_0=A_{\Theta_0}$ и $A_1=A_{\Theta_1}$, где $\Theta_0=\{0\}\times\mathcal{H}$ и $\Theta_1=\mathcal{H}\times\{0\}$.  Нетрудно получить из Предложения \ref{L:3.2}(iv), что $A_0 = A_0^*$ и $A_1 = A_1^*$. 
 
 \begin{definition}\cite{DM951}\label{WeylFunkDef}
 Пусть $\Pi = \{\mathcal{H}, \Gamma_0, \Gamma_1\}$ -- граничная тройка для $A^*$.\\
 Операторно-значная функция $M(\cdot)$, определяемая равенством
 \begin{equation*}
 M(z)\Gamma_0{f}_{z}=\Gamma_1{f}_{z}, \quad {f}_{z} \in {\mathfrak{N}}_{z}(A),\quad z\in \mathbb{C}\setminus\mathbb{R},
 \end{equation*}
 называется функцией Вейля, соответствующей граничной тройке $\Pi$.
 \end{definition}

Напомним, что функция Вейля $M$ принадлежит классу функций Неванлинны $R[\mathcal{H}]$, то есть $M:\mathbb{C}\setminus\mathbb{R} \to [\mathcal{H}]$ голоморфна, а также  
\[
\imm z\,\imm M(z)>0, \qquad M(z) = M^*(\overline{z}), \quad z\in \mathbb{C}\setminus\mathbb{R}.
\]
 
\subsubsection{Расширения полуограниченного оператора}

Пусть $A\in \mathcal C(\mathfrak{H})$ -- плотно заданный полуограниченный симметрический оператор, $A\ge a\rI_{\gH}$.  Будем обозначать его фридрихсово расширение через $A_F$. 
 Пусть $\Pi = \{\cH,\Gamma_0,\Gamma_1\}$ -- некоторая граничная тройка для $A^*$, и $M(\cdot)$ -- соответствующая функция Вейля. Во-первых, отметим, что существует следующий сильный резольвентный предел (см. \cite{DM951})
\begin{equation}\label{Mlim}
M(a):=s-R-\lim\limits_{x\uparrow a}M(x).
\end{equation}
 
 \begin{proposition}[\cite{DM951,Mal921}]\label{SemBKappa}
Пусть $A\ge a\rI_{\gH}$ с некоторым $a\ge 0$ и $\Pi=\{\mathcal H,\Gamma_0,\Gamma_1\}$ -- граничная тройка для $A^*$ такая, что
$A_0={A}_F$. Пусть также $\Theta=\Theta^* \in \widetilde{\mathcal{C}}(\cH)$ и $A_\Theta$ -- соответствующее самосопряженное расширение. Если $M(a)\in [\mathcal{H}]$, то:
 \begin{itemize}
 \item[(i)]  $A_\Theta \ge a\rI_{\gH}$ в точности тогда, когда $ \Theta-M(a)\ge \bO_{\cH}$;
 \item[(ii)] $\kappa_-(A_\Theta - a\rI_{\gH}) =  \kappa_-(\Theta-M(a))$.
  \end{itemize}
  Если дополнительно $A$ -- положительно определен, то есть $a>0$, то:
   \begin{itemize}
 \item[(iii)]  $A_\Theta$ положительно определен в точности тогда, когда линейное отношение $ \Theta-M(0)$ положительно определено;
 \item[(iv)] для каждого  $p\in (0,\infty]$ верна эквивалентность 
 \[
 E_{A_\Theta}(\R_-)A_\Theta \in \mathfrak{S}_p(\gH) \ \Longleftrightarrow \ E_{\Theta - M(0)}(\R_-)(\Theta - M(0)) \in \mathfrak{S}_p(\cH).
 \]
 \end{itemize}
\end{proposition}

Здесь $\kappa_-(T)$  -- размерность "отрицательного"\, подпространства оператора $T = T^*$, т. е. $\kappa_-(T)=\dim E_T(\R_-)$, где $E_T$ -- спектральная мера оператора $T$.

     \begin{proposition}\cite{DM951}\label{prop_II.1.5_02}
Пусть $A\ge 0$ и $\Pi=\{\mathcal H,\Gamma_0,\Gamma_1\}$ -- граничная тройка для $A^*$ такая, что
$A_0=A_F$. 
Тогда следующие утверждения
\begin{itemize}
\item[(i)] линейное отношение $\Theta=\Theta^\ast\in \widetilde{\mathcal{C}}(\mathcal H)$ полуограничено снизу; 
\item[(ii)] самосопряженное расширение $A_\Theta$ полуограничено снизу;
\end{itemize}
 эквивалентны, если и только если $M(x)$ равномерно стремится к $-\infty$ при $x\to -\infty$, то есть для любого $a>0$ найдется $x_a<0$, такое что $M(x)<-a \rI_{\mathcal{H}}$ для всех $x<x_a$ (в этом случае пишут $M(x)\rightrightarrows-\infty$ при $x\to-\infty$).
               \end{proposition}
             
            \begin{remark}
 Согласно \cite[Предложение 4]{DM951}), что $A_0 = A_F$ в точности тогда, когда для всех $f\in \cH\setminus \{0\}$ 
\[
\lim_{x\to -\infty} (M(x)f,f)_{\cH} = -\infty,
\]
где $M(\cdot)$ -- соответствующая функция Вейля. Однако, при этом $M(x)$ не обязана равномерно стремиться к $-\infty$ при $x\to -\infty$ (см. \cite[стр. 23]{DM951}).  
            \end{remark}  
            
\subsubsection{Прямые суммы граничных троек}

Пусть $S_k$ -- плотно заданный симметрический оператор в гильбертовом пространстве $\mathfrak{H}_k$, $k\in \N$. Будем считать, что ${n}_+(S_k) = {n}_-(S_k) \le \infty$ для всех $k\in \N$. 
Рассмотрим оператор $A:= \bigoplus^{\infty}_{k=1}S_k$, действующий в
$\mathfrak{H} := \bigoplus_{k=1}^{\infty}\mathfrak{H}_k$. 
Ясно, что
   \[ 
A^* = \bigoplus^{\infty}_{k=1}S^*_k,\quad \dom (A^*) = \Big\{
\oplus^{\infty}_{k=1} f_k\in \mathfrak{H}:\
 f_k\in\dom(S^*_k),\ \  \sum_{k=1}^{\infty}\|S^*_k
 f_k\|_{\mathfrak{H}_k}^2<\infty\Big\}.
   \] 
Пусть   $\Pi_k=\left\{\mathcal H_k, \Gamma^k_0, \Gamma^k_1\right\}$
-- граничные тройки для операторов $S^*_k$, $k \in \N$.
Полагая $\cH :=\bigoplus_{k=1}^{\infty}\mathcal{H}_k$, определим отображения
 $\Gamma_0$ и $\Gamma_1$ 
следующим образом
   \begin{equation}\label{III.1_02}
\Gamma_j  := \bigoplus_{k=1}^{\infty} \Gamma^k_j,\quad
\dom(\Gamma_j) = \bigl\{f = \oplus^{\infty}_{k=1} f_k
\in\dom (A^*): \ \sum_{k=1}^{\infty}\|\Gamma^k_j f_k\|^2_{\mathcal{H}_k}
<\infty\bigr\}.
   \end{equation}
   
       \begin{definition}\cite{KosMal10,MalN11}\label{def_III.1_01}
Пусть $\mathcal H =\bigoplus_{k=1}^{\infty}\mathcal{H}_k$ и отображения $\Gamma_j$ определены в \eqref{III.1_02}. Совокупность $\Pi=\{\cH, \Gamma_0, \Gamma_1\}$ называется прямой суммой граничных троек и обозначается $\Pi:= \bigoplus^{\infty}_{k=1}\Pi_k$.
    \end{definition}

Прямая сумма граничных троек $\Pi:= \bigoplus^{\infty}_{k=1}\Pi_k$, вообще говоря, не является граничной тройкой для оператора $A^*=\bigoplus^{\infty}_{k=1}S_k^*$ (см. \cite[\S 3]{KosMal10}, \cite[\S 3]{MalN11}, а также \cite{CarMalPos13}). В заключение лишь отметим, что в \cite{KosMal10,MalN11} были предложены процедуры "регуляризации"\, граничных троек $\Pi_k$, позволяющие получать граничную тройку из суммы модифицированных граничных троек $\widetilde{\Pi}_k$.

\subsection{Оператор Шредингера с $\delta$-взаимодействиями в пространстве вектор-функций} \label{sec:defs}

Пусть ${X}=\{{x}_k\}_{k=0}^{\infty} \subset \mathbb{R}_+$ -- строго возрастающая последовательность и  $\lim\limits_{k\to\infty}{x}_k = \infty$.  Положим 
\begin{equation}\label{eq:dr_k}
d_k:=x_k-x_{k-1},\qquad r_k:=\sqrt{d_k+d_{k+1}}
\end{equation}
для всех $k\in\N$ и всюду в дальнейшем будем считать, что
\be\label{eq:d*<}
d^*:=\sup_{k\in\mathbb{N}}d_k<\infty.
\ee
Пусть также $\gA =\{\gA_k\}_{k=1}^{\infty} \subset \mathbb{C}^{p\times p}$, причем  $\gA_k=\gA_k^*$ для всех $k\in\N$.
Рассмотрим формальное дифференциальное выражение 
\begin{equation}\label{difexpr}
l_{X,\gA}:= -\frac{\rd^2}{\rd x^2}\otimes \rI_p+\sum_{k=1}^{\infty}\gA_k\delta(x-x_k),
\end{equation}
c которым в $L^2(\mathbb{R}_+;\mathbb{C}^p)$ ассоциируют следующий дифференциальный оператор 
\begin{equation}\label{deltaop}
\begin{split}
H^0_{X,\gA}:=&-\frac{\rd^2}{\rd x^2}\otimes \rI_p,\\
\dom(H^0_{X,\gA})= &\left\lbrace f\in W^{2,2}_{\comp} (\mathbb{R}_+\setminus X; \mathbb{C}^p): \begin{array}{c} f'(0)=0,\ f(x_k+)=f(x_k-)\\ f'(x_k+)-f'(x_k-)=\gA_k f(x_k) \end{array} \right\rbrace  .
\end{split}
\end{equation}
Его замыкание обозначим через $H_{X,\gA}$,  $H_{X,\gA}:=\ol{H^0_{X,\gA}}$.
Мы изучаем $H_{X,\gA}$ в рамках теории расширений, рассматривая его как собственное расширение оператора 
\begin{equation}\label{minop}
H_{\min}=\bigoplus_{k=1}^{\infty}H_k,\qquad
 H_k=-\frac{\rd^2}{\rd x^2}\otimes \rI_p,\quad
  \dom (H_k)=W^{2,2}_0([x_{k-1}, x_k];\mathbb{C}^p).
\end{equation}
Легко видеть, что 
\be
H_{\min}^*=\bigoplus_{k=1}^{\infty}H_k^*,\qquad \dom( H_{\min}^*)=W^{2,2}(\R_+\setminus X;\mathbb{C}^p).
\ee

\subsection{Граничные тройки  и параметризация оператора $H_{X,\gA}$}\label{sec:BT} 

Прежде чем формулировать результаты, нам понадобится один простой факт. Пусть $A$ -- плотно заданный симметрический оператор в $\mathfrak{H}$ и $n_+(A)=n_-(A)\le \infty$. Пусть $\mathcal{K}$ -- другое гильбертово пространство. В новом гильбертовом пространстве $\gH_{\otimes}:= \gH\otimes \mathcal{K}$ рассмотрим оператор $A_\otimes := A\otimes \rI_{\mathcal{K}}$. Очевидно, что $A_\otimes$ -- симметрический плотно заданный оператор в $\gH_\otimes$ и
\[
A_\otimes^* = A^*\otimes \rI_{\mathcal{K}}, \qquad \dom( A_\otimes^*)=\dom (A^*)\otimes \mathcal{K}.
\]

Справедлива следующая лемма.

\begin{lemma}\label{Lemma}
Пусть $\Pi=\{\mathcal{H},\Gamma_0,\Gamma_1\}$ -- граничная тройка для $A^*$, а $m(\cdot)$ -- соответствующая функция Вейля.
Тогда совокупность $\Pi_\otimes=\{\mathcal{H}\otimes \mathcal{K},\Gamma_0\otimes \rI_{\mathcal{K}},\Gamma_1\otimes \rI_{\mathcal{K}}\}$ будет граничной тройкой для оператора $A_\otimes^*$, а  $M(\cdot):=m(\cdot)\otimes \rI_{\mathcal{K}}$ -- соответствующей функцией Вейля.
\end{lemma}

Используя Лемму \ref{Lemma} и результаты работы \cite{KosMal10}, нетрудно построить граничную тройку для оператора $H_{\min}^*$. Для операторов $H_k^*$, $k\in \mathbb{N}$, рассмотрим сначала 
граничные тройки $\widetilde{\Pi}_{k}=\{\mathbb{C}^{2p}, \widetilde{\Gamma}_0^{k}, \widetilde{\Gamma}_1^{k}\}$ вида
    \begin{equation}\label{WrBoundTr1}
    \widetilde{\Gamma}_0^{k}f:=\begin{pmatrix}
    f(x_{k-1}+)\\[1mm]f'(x_k-)
    \end{pmatrix}, \quad \widetilde{\Gamma}_1^{k}f:=\begin{pmatrix}
        f'(x_{k-1}+)\\[1mm]f(x_k-)
        \end{pmatrix}, \quad f\in W^{2,2}([x_{k-1},x_k];\mathbb{C}^p).
    \end{equation}
    Соответствующая функция Вейля имеет вид
    \begin{equation}\label{WrWF1}
    \widetilde{M}_k(z)=\frac{1}{\cos(\sqrt{z}d_k)}\begin{pmatrix}
                           \sqrt{z}\sin(\sqrt{z}d_k)& 1\\[1mm]
                           1 & {\sin(\sqrt{z}d_k)}/\sqrt{z}
                           \end{pmatrix} \otimes \rI_p,\quad z \in\mathbb{C}_+.
    \end{equation}
 Прямая сумма  $\widetilde{\Pi}=\bigoplus_{k=1}^{\infty}\widetilde{\Pi}_{k}$
      является граничной тройкой для  $H_{\min}^*=\bigoplus_{k=1}^{\infty}H^*_k$ тогда и только тогда, когда $\inf_{k\in\mathbb{N}}d_k>0$ (см. \cite{KosMal10}). В связи с этим в  \cite{MalN11} (см. также  \cite{KosMal10}) была предложена процедура "регуляризации"\, исходных троек $\widetilde{\Pi}_k$. А именно, полагая 
\begin{equation}\label{RQ1}
        R_k:=\begin{pmatrix}
           d_k^{1/2} & 0\\[1mm]
           0 & d_k^{3/2} 
           \end{pmatrix} \otimes \rI_p,\qquad Q_k: = \widetilde{M}_k(0) = \begin{pmatrix}
                    0 & 1\\[1mm]
                    1 & d_k
                    \end{pmatrix} \otimes \rI_p
\end{equation}
и 
\be\label{eq:G=tiG} 
\Gamma_0^k:= R_k \widetilde{\Gamma}_0^k, \qquad \Gamma_1^k : = R_k^{-1}(\widetilde{\Gamma}_1^k - Q_k\widetilde{\Gamma}_0^k),
\ee
получим новые регуляризованные тройки $\Pi_k=\{\mathbb{C}^{2p},\Gamma_0^k,\Gamma_1^k\}$ для операторов $H_k^*$. Применяя Лемму \ref{Lemma} и \cite[Теорему 4.7]{KosMal10}, приходим к следующему результату.
        
    \begin{proposition}\label{BT1}  
        \begin{itemize}
    \item[(i)] Совокупность $\Pi = \bigoplus_{k=1}^{\infty}\Pi_{k}=\{\mathcal{H}, \Gamma_0,\Gamma_1\}$, в которой $\cH = \ell^2(\N;\mathbb{C}^{2p})$ и граничные тройки $\Pi_k=\{\mathbb{C}^{2p},\Gamma_0^k,\Gamma_1^k\}$ имеют вид 
          \begin{equation}\label{Pi1}
           \begin{split}
          \Gamma_0^{k}f :=& \begin{pmatrix}
d_k^{1/2}f(x_{k-1}+) \\[1mm] d_k^{3/2}f'(x_{k}-) \end{pmatrix},\\[2mm]
     \Gamma_1^{k}f :=& \begin{pmatrix}
    d_k^{-1/2}\big(f'(x_{k-1}+)-f'(x_{k}-)\big) \\[1mm] d_k^{-3/2}\big(f(x_{k}-)-f(x_{k-1}+)\big)-d_k^{-1/2}f'(x_k-)
        \end{pmatrix},
    \end{split}
    \end{equation}
    образует граничную тройку для оператора $H_{\min}^*=\bigoplus_{k=1}^{\infty}H^*_k$.
    
    \item[(ii)] Соответствующая функция Вейля имеет вид
    \begin{equation}\label{Mk1}
     {M}(z) = \bigoplus_{k=1}^{\infty}M_k(z),\quad  M_k(z)=R_k^{-1}(\widetilde{M}_k(z)-Q_k)R_k^{-1},\quad z\in\mathbb{C}_+,
        \end{equation}
   где $\widetilde{M}_k(\cdot)$ и $R_k$, $Q_k$ определены равенствами \eqref{WrWF1} и \eqref{RQ1}, соответственно.      
     \end{itemize}
        \end{proposition}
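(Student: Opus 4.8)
The plan is to construct the triplet interval by interval, rescale it, and then form the direct sum, reducing the only delicate step---that the orthogonal sum is again a boundary triplet---to the scalar case $p=1$ of \cite{KosMal10} through the tensoring Lemma \ref{Lemma}. On a single interval $[x_{k-1},x_k]$ the operator $H_k^*=-\frac{\rd^2}{\rd x^2}\otimes\rI_p$ acts on $W^{2,2}([x_{k-1},x_k];\mathbb{C}^p)$, and starting from the Lagrange identity $(H_k^*f,g)-(f,H_k^*g)=\big[(f,g')_{\mathbb{C}^p}-(f',g)_{\mathbb{C}^p}\big]_{x_{k-1}+}^{x_k-}$ a direct rearrangement of the four endpoint terms reproduces the right-hand side of \eqref{GrIdent} for $\widetilde\Gamma_0^k,\widetilde\Gamma_1^k$ from \eqref{WrBoundTr1}; surjectivity of $\widetilde\Gamma^k=(\widetilde\Gamma_0^k,\widetilde\Gamma_1^k)$ is immediate because $f(x_{k-1}+),f'(x_{k-1}+),f(x_k-),f'(x_k-)$ may be prescribed independently, so $\widetilde\Pi_k$ is a boundary triplet for $H_k^*$. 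Expanding a solution of $-f''=zf$ in $\cos(\sqrt z(x-x_{k-1}))$ and $\sin(\sqrt z(x-x_{k-1}))/\sqrt z$ and expressing $\widetilde\Gamma_1^k f_z$ through $\widetilde\Gamma_0^k f_z$ gives the Weyl function \eqref{WrWF1}.

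For the rescaled triplet $\Pi_k$ of \eqref{eq:G=tiG} I would verify that the transform $(\widetilde\Gamma_0^k,\widetilde\Gamma_1^k)\mapsto(\Gamma_0^k,\Gamma_1^k)$ leaves \eqref{GrIdent} unchanged: in $(\Gamma_1^k f,\Gamma_0^k g)-(\Gamma_0^k f,\Gamma_1^k g)$ the factors $R_k$ and $R_k^{-1}$ cancel since $R_k=R_k^*$, and the two terms carrying $Q_k$ cancel since $Q_k=Q_k^*$, so the right-hand side is that of the old triplet; bijectivity of the transform, hence surjectivity of $(\Gamma_0^k,\Gamma_1^k)$, follows from invertibility of $R_k$. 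Substituting $\widetilde\Gamma_1^k f_z=\widetilde M_k(z)\widetilde\Gamma_0^k f_z$ into \eqref{eq:G=tiG} gives $\Gamma_1^k f_z=R_k^{-1}(\widetilde M_k(z)-Q_k)R_k^{-1}\,\Gamma_0^k f_z$, and since $\widetilde\Gamma_0^k$ maps $\mathfrak{N}_z(H_k)$ bijectively onto $\mathbb{C}^{2p}$ this is precisely \eqref{Mk1}; inserting \eqref{RQ1} into \eqref{eq:G=tiG} yields the explicit \eqref{Pi1}. I would also record that $\widetilde M_k(0)=Q_k$ is the $z\to 0$ limit of \eqref{WrWF1}, whence $M_k(0)=0$, the normalization the rescaling is meant to produce.

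It remains to show that $\Pi=\bigoplus_k\Pi_k$ is a boundary triplet for $H_{\min}^*$ with Weyl function $\bigoplus_k M_k$. For $p=1$ this is \cite[Theorem 4.7]{KosMal10}: under $d^*=\sup_k d_k<\infty$ (cf. \eqref{eq:d*<}) the sum $\bigoplus_k\pi_k$ of the scalar rescaled triplets is a boundary triplet for $\bigoplus_k s_k^*$ with Weyl function $\bigoplus_k m_k$, where $s_k=-\frac{\rd^2}{\rd x^2}$ on $L^2([x_{k-1},x_k])$. To reach $p>1$, write $H_{\min}=\big(\bigoplus_k s_k\big)\otimes\rI_p$ under $L^2(\mathbb{R}_+;\mathbb{C}^p)=L^2(\mathbb{R}_+)\otimes\mathbb{C}^p$ and apply Lemma \ref{Lemma} with $\mathcal K=\mathbb{C}^p$, which gives a boundary triplet $\{\ell^2(\N;\mathbb{C}^2)\otimes\mathbb{C}^p,\gamma_0\otimes\rI_p,\gamma_1\otimes\rI_p\}$ for $H_{\min}^*$ with Weyl function $(\bigoplus_k m_k)\otimes\rI_p$. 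Under the identification $\ell^2(\N;\mathbb{C}^2)\otimes\mathbb{C}^p\cong\ell^2(\N;\mathbb{C}^{2p})$ and the relations $R_k=r_k\otimes\rI_p$, $Q_k=q_k\otimes\rI_p$, $\widetilde M_k=\widetilde m_k\otimes\rI_p$, the tensored maps become $\gamma_j\otimes\rI_p=\bigoplus_k\Gamma_j^k$ and the Weyl function becomes $\bigoplus_k M_k$, which is exactly \eqref{Pi1} and \eqref{Mk1}, establishing (i) and (ii).

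The genuinely hard point is this last step rather than the per-interval bookkeeping: an orthogonal sum of boundary triplets need not be a boundary triplet, and for the non-rescaled $\widetilde\Pi_k$ the sum is one exactly when $\inf_k d_k>0$. The rescaling by $R_k$ together with the shift by $Q_k=\widetilde M_k(0)$ is engineered so that the regularized scalar Weyl functions $m_k(z)$ stay uniformly bounded, with uniformly bounded inverse imaginary parts, on compact subsets of $\mathbb{C}\setminus\mathbb{R}$; under $d^*<\infty$ this uniformity is the hypothesis behind \cite[Theorem 4.7]{KosMal10} and is where the estimates on \eqref{WrWF1}--\eqref{RQ1} must be carried out with care. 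Once the scalar assertion is in place, Lemma \ref{Lemma} makes the passage to arbitrary $p$ automatic, because tensoring with the fixed finite-dimensional space $\mathbb{C}^p$ preserves the relevant norm bounds.
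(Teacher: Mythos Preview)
Your proposal is correct and follows exactly the route the paper takes: the paper states the proposition as an immediate consequence of Lemma~\ref{Lemma} and \cite[Theorem~4.7]{KosMal10}, i.e., the scalar case $p=1$ tensored with $\rI_p$. Your write-up simply unpacks the per-interval details (Lagrange identity, surjectivity, the effect of the $R_k,Q_k$ transformation on the Weyl function) that the paper leaves implicit before invoking those two results.
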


Теперь, используя Предложение \ref{L:3.2}, мы можем представить оператор $H_{X,\gA}$ в виде \eqref{eq:A_B}. Для этого определим блочную матрицу Якоби
\be\label{B1}
    B_{X,\gA}=
     \begin{pmatrix}
    \bO_p & -d_1^{-2}{\rm I}_p & \bO_p & \bO_p & \ldots\\[1mm]
    -d_1^{-2}\rI_p & -d_1^{-2}\rI_p &d_1^{-3/2}d_2^{-1/2}\rI_p & \bO_p & \ldots\\[1mm]
    \bO_p & d_1^{-3/2}d_2^{-1/2}\rI_p & d_2^{-1}\gA_1 & -d_2^{-2}\rI_p & \ldots\\[1mm]
    \bO_p & \bO_p & -d_2^{-2} \rI_p & -d_2^{-2}\rI_p & \ldots \\[1mm]
    \dots&\dots&\dots &\dots&\dots
    \end{pmatrix} . 
\ee  
Оказывается, что минимальный оператор, задаваемый матрицей \eqref{B1} в $\ell^2(\N;\mathbb{C}^p)$, и за которым мы сохраним обозначение $B_{X,\gA}$, будет граничным оператором для $H_{X,\gA}$. 

 \begin{proposition}\label{BoundOp}
    Пусть $\Pi = \{\mathcal{H}, \Gamma_0,\Gamma_1\}$ -- граничная тройка для оператора $H_{\min}^*$, построенная в Предложении \ref{BT1}. Тогда
    \begin{equation}\label{Bop1}
           \dom (H_{X,\gA}) = \left\{f\in W^{2,2}(\mathbb{R}_+\setminus X; \mathbb{C}^p):\, \Gamma_1f=B_{X,\gA} \Gamma_0 f \right\}.
   \end{equation}
    \end{proposition}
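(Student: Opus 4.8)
The plan is to apply the parametrization \eqref{eq:A_B} of proper extensions to the boundary triplet $\Pi=\{\cH,\Gamma_0,\Gamma_1\}$ for $H_{\min}^*$ constructed in Proposition \ref{BT1}, taking as boundary operator the minimal Jacobi-type operator $B_{X,\gA}$ generated by \eqref{B1} in $\ell^2(\N;\mathbb{C}^p)$. Writing $\widetilde H:=A_{B_{X,\gA}}=H_{\min}^*\upharpoonright_{\ker(\Gamma_1-B_{X,\gA}\Gamma_0)}$, the goal splits into two parts: first, to unravel the abstract boundary condition $\Gamma_1f=B_{X,\gA}\Gamma_0f$ into the interface conditions appearing in \eqref{deltaop}; second, to identify $\widetilde H$ with the closure $H_{X,\gA}=\ol{H^0_{X,\gA}}$.

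For the first part I would use the isometric identification $\ell^2(\N;\mathbb{C}^{2p})\cong\ell^2(\N;\mathbb{C}^p)$ that "unpacks" each vector $\Gamma_j^kf\in\mathbb{C}^{2p}$ into two consecutive $\mathbb{C}^p$-blocks, so that both $\Gamma_0f$ and $\Gamma_1f$ become genuine sequences in $\ell^2(\N;\mathbb{C}^p)$ on which the block-tridiagonal matrix \eqref{B1} acts. Reading off \eqref{Pi1}, one has $\Gamma_0f=(d_1^{1/2}f(x_0+),\,d_1^{3/2}f'(x_1-),\,d_2^{1/2}f(x_1+),\dots)$ and an analogous expression for $\Gamma_1f$. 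I would then compare the two sides of $\Gamma_1f=B_{X,\gA}\Gamma_0f$ row by row. A direct computation shows that the first row collapses to the boundary condition $f'(0)=0$; each even row reduces, after cancelling the $d_k$-weights, to a continuity condition $f(x_k+)=f(x_k-)$; and each remaining odd row, after using the continuity already obtained, reproduces a jump condition $f'(x_k+)-f'(x_k-)=\gA_k f(x_k)$. Hence, for $f\in W^{2,2}(\R_+\setminus X;\mathbb{C}^p)$ with $\Gamma_0f\in\dom(B_{X,\gA})$, the relation $\Gamma_1f=B_{X,\gA}\Gamma_0f$ holds if and only if $f$ satisfies exactly the interface conditions in \eqref{deltaop}, so that $\dom(\widetilde H)$ is the right-hand side of \eqref{Bop1}.

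The remaining, more delicate, part is the equality $\widetilde H=H_{X,\gA}$. One inclusion is easy: any $f\in\dom(H^0_{X,\gA})$ is compactly supported and satisfies the interface conditions, so $\Gamma_0f,\Gamma_1f$ are finitely supported, $\Gamma_0f\in\dom(B_{X,\gA})$ and $\Gamma_1f=B_{X,\gA}\Gamma_0f$; thus $H^0_{X,\gA}\subseteq\widetilde H$, and since $\gr(B_{X,\gA})$ is closed, Proposition \ref{L:3.2}(ii) gives that $\widetilde H$ is closed, whence $H_{X,\gA}=\ol{H^0_{X,\gA}}\subseteq\widetilde H$. For the converse I would exploit that $B_{X,\gA}$ is the \emph{minimal} operator, i.e.\ the closure of its restriction $B_0$ to $\ell^2_0(\N;\mathbb{C}^p)$: given $f\in\dom(\widetilde H)$ one has $\Gamma_0f\in\dom(B_{X,\gA})$, so there are finitely supported $\xi_n\to\Gamma_0f$ with $B_0\xi_n\to\Gamma_1f$ in $\ell^2$. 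Using the direct-sum structure \eqref{III.1_02} and the surjectivity of each single-interval map $\Gamma^k=(\Gamma_0^k,\Gamma_1^k)$, I would lift each pair $(\xi_n,B_0\xi_n)$ to a \emph{compactly supported} $f_n\in\dom(H^0_{X,\gA})$, setting $f_n\equiv0$ on all intervals beyond the support of $\xi_n$, and then conclude $f\in\dom(H_{X,\gA})$ by closedness of $H_{X,\gA}$.

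The main obstacle is precisely this last lifting step: the compactly supported preimages $f_n$ must be chosen so that they converge to $f$ not merely through their boundary data but in the graph norm of $H_{\min}^*$, i.e.\ $f_n\to f$ and $f_n''\to f''$ in $L^2$. This requires uniform (in $k$) control of the single-interval boundary-value problems, which is exactly what the regularization $R_k,Q_k$ of \eqref{RQ1}--\eqref{eq:G=tiG} and the resulting boundary-triplet property of $\Pi=\bigoplus_k\Pi_k$ under \eqref{eq:d*<} are designed to furnish. Alternatively one may invoke the abstract identity $A_{\ol\Theta}=\ol{A_\Theta}$ with $\Theta=\gr B_0$, together with the elementary fact that every $f\in\dom(A_{B_0})$ is a graph-norm limit of its truncations, which all lie in $\dom(H^0_{X,\gA})$; this yields $A_{B_{X,\gA}}=\ol{A_{B_0}}=\ol{H^0_{X,\gA}}=H_{X,\gA}$ and closes the argument.
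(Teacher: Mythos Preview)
Your approach is correct and, in its essentials, coincides with the paper's proof. In particular, your ``alternative'' at the very end---invoking $A_{\overline{\Theta}}=\overline{A_{\Theta}}$ with $\Theta=\gr B^0_{X,\gA}$, which follows from Proposition~\ref{L:3.2}(i) via $\overline{A_\Theta}=(A_{\Theta^*})^*=A_{\overline{\Theta}}$---is exactly how the paper closes the argument; the laborious lifting construction you sketch before that is unnecessary. Two cosmetic differences are worth noting. First, for the identification of the boundary condition the paper does not expand $\Gamma_1 f=B_{X,\gA}\Gamma_0 f$ row by row; instead it works with the \emph{unregularized} maps $\widetilde\Gamma_j=\bigoplus_k\widetilde\Gamma_j^k$ from \eqref{WrBoundTr1}, observes directly that the interface conditions in \eqref{deltaop} are equivalent to $\widetilde\Gamma_1 f=\widetilde B_\gA\widetilde\Gamma_0 f$ for the simple block matrix $\widetilde B_\gA$ of \eqref{WrB}, and then transforms via \eqref{eq:G=tiG} to obtain the identity $B_{X,\gA}=R_X^{-1}(\widetilde B_\gA-Q_X)R_X^{-1}$ of \eqref{1}. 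This avoids the somewhat tedious cancellations you describe. Second, the paper records explicitly the intermediate equality \eqref{eq:BOp_0} for the \emph{pre-minimal} operator $H^0_{X,\gA}$ on compactly supported functions, which makes the final chain $H_{X,\gA}=\overline{H^0_{X,\gA}}=H_{\overline{B^0_{X,\gA}}}=H_{B_{X,\gA}}$ transparent. Your direct computation buys a more hands-on verification, while the paper's route keeps the algebra lighter by postponing the regularization to a single matrix identity.
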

    
    \begin{proof} Обозначим через $B^0_{X,\gA}$ оператор, задаваемый матрицей \eqref{B1} в $\ell^2(\N;\mathbb{C}^p)$ на области $\dom(B_{X,\gA}^0) = \ell^2_0(\N;\mathbb{C}^p)$.  Покажем, что 
\be\label{eq:BOp_0}
 \dom (H_{X,\gA}^0) = \left\{f\in W^{2,2}_{\comp}(\mathbb{R}_+\setminus X; \mathbb{C}^p):\, \Gamma_1f=B^0_{X,\gA} \Gamma_0 f \right\}.
\ee
Пусть  $\widetilde{\Gamma}_j:=\bigoplus_{k=1}^{\infty}\widetilde{\Gamma}_j^{k}$, $j\in \{0,1\}$, а отображения $\widetilde{\Gamma}_j^k$, $k\in\N$ oпределены в \eqref{WrBoundTr1}. Тогда $\widetilde{\Gamma}_j f \in \ell^2_0(\N;\mathbb{C}^{2p})$ для всех $f\in W^{2,2}_{\comp}(\R_+\setminus X;\mathbb{C}^2)$. Полагая  
    \begin{equation}\label{WrB}
    \widetilde{B}_{\gA}: = \begin{pmatrix}
    \bO_p & \bO_p & \bO_p & \bO_p  &\bO_p  & \dots\\
    \bO_p & \bO_p & \rI_p & \bO_p & \bO_p  &  \dots \\
     \bO_p &  \rI_p & \gA_1  & \bO_p  &\bO_p  & \dots \\
      \bO_p &  \bO_p &  \bO_p &  \bO_p & \rI_p  &\dots \\
\bO_p &  \bO_p &  \bO_p &  \rI_p &  \gA_2 & \dots\\
       \dots & \dots & \dots & \dots & \dots &\dots 
    \end{pmatrix},
    \end{equation}
и проводя несложные вычисления, придем к заключению: $f \in \dom (H_{X,\gA}^0)$ в точности тогда, когда $\widetilde{\Gamma}_1f=\widetilde{B}_{\gA}\widetilde{\Gamma}_0 f$. Теперь из \eqref{RQ1} и \eqref{eq:G=tiG} вытекает следующая связь между матрицами $\widetilde{B}_{\gA}$ и $B_{X,\gA}$ 
   \begin{equation}\label{1}
   B_{X,\gA} = R_X^{-1}(\widetilde{B}_{\gA} -Q_X) R_X^{-1}, \quad R_X:= \bigoplus_{k=1}^{\infty}R_k,\quad Q_X:=\bigoplus_{k=1}^{\infty}Q_k,
   \end{equation}
что, в свою очередь, доказывает \eqref{eq:BOp_0}. 
 Осталось заметить, что $H_{X,\gA} = \ol{H_{X,\gA}^0} = H_{\ol{B_{X,\gA}^0}} = H_{B_{X,\gA}}$ (см. Предложение \ref{L:3.2}(i)). 
 \end{proof}

В дальнейшем нам понадобится еще одна граничная тройка. А именно, для каждого $k\in\N$ определим отображения $\widetilde{\Gamma}_0^{k}, \widetilde{\Gamma}_1^{k}: W^{2,2}([x_{k-1},x_k];\mathbb{C}^p)\to \mathbb{C}^{2p}$, полагая
          \begin{equation}\label{WrBoundTr2}
          \widetilde{\Gamma}_0^{k}f:=\begin{pmatrix}
          f(x_{k-1}+)\\[1mm]-f(x_k-)
          \end{pmatrix}, \qquad \widetilde{\Gamma}_1^{k}f:=\begin{pmatrix}
              f'(x_{k-1}+)\\[1mm]f'(x_k-)
              \end{pmatrix}. 
          \end{equation}
Нетрудно проверить (см. \cite{KosMal10}), что соответствующая функция Вейля имеет вид
          \begin{equation}\label{WrWF2}
          \widetilde{M}_k(z)=-\frac{\sqrt{z}}{\sin(\sqrt{z}d_k)}\begin{pmatrix}
                                      \cos(\sqrt{z}d_k) & 1\\[1mm]
                                      1 & \cos(\sqrt{z}d_k)
                                      \end{pmatrix} \otimes \rI_p,\qquad z \in\mathbb{C}_+.
          \end{equation}
Как и в предыдущем случае,  прямая сумма  
$\widetilde{\Pi}=\bigoplus_{k=1}^{\infty}\widetilde{\Pi}_{k}$ не будет граничной тройкой для оператора $H_{\min}^*=\bigoplus_{k=1}^{\infty}H^*_k$, если $\inf_{k\in\mathbb{N}}d_k = 0$. Полагая 
 \begin{equation}\label{RQ2}
          R_k:=\begin{pmatrix}
                     \sqrt{d_k} & 0\\[1mm]
                           0 & \sqrt{d_k}
                     \end{pmatrix} \otimes \rI_p,\qquad Q_k: = \widetilde{M}_k(0) = -\frac{1}{d_k}\begin{pmatrix}
                                           1 & 1\\[1mm]
                                            1 & 1
                              \end{pmatrix} \otimes \rI_p, 
 \end{equation}  
 рассмотрим новую граничную тройку $\Pi_k = \{\mathbb{C}^{2p}, \Gamma_0^k,\Gamma_0^k \}$, в которой отображения $\Gamma_0^k$ и $\Gamma_1^k$ получены из \eqref{WrBoundTr2} при помощи \eqref{eq:G=tiG}. Из Леммы \ref{Lemma} и \cite[Теорема 4.1]{KosMal10} легко следует следующее   утверждение.
                                
        \begin{proposition}\label{BT2} 
        \begin{itemize}        
      \item[(i)]Совокупность $\Pi = \bigoplus_{k=1}^{\infty}\Pi^{k}=\{\mathcal{H}, \Gamma_0,\Gamma_1\}$, в которой $\cH = \ell^2(\N;\mathbb{C}^{2p})$ и граничные тройки $\Pi_k=\{\mathbb{C}^{2p},\Gamma_0^k,\Gamma_1^k\}$ имеют вид 
    \begin{equation}\label{Pi2}
     \begin{split}
     \Gamma_0^{k}f :=& \begin{pmatrix}
     d_k^{1/2}f(x_{k-1}+)\\[1mm]-d_k^{1/2}f(x_{k}-)
     \end{pmatrix},\\[2mm]
     \Gamma_1^{k}f :=& \begin{pmatrix}
     d_k^{-1/2}f'(x_{k-1}+)+d_k^{-3/2}\big(f(x_{k-1}+)-f(x_k-)\big)\\[1mm]d_k^{-1/2}f'(x_{k}-)+d_k^{-3/2}\big(f(x_{k-1}+)-f(x_k-)\big)
     \end{pmatrix},
        \end{split}
        \end{equation}
        образует граничную тройку для оператора $H_{\min}^*=\bigoplus_{k=1}^{\infty}H^*_k$.
     \item[(ii)] Соответствующая функция Вейля имеет вид 
     \begin{equation}\label{Mk2}
     {M}(z) = \bigoplus_{k=1}^{\infty}M_k(z), \quad     M_k(z)=R_k^{-1}(\widetilde{M}_k(z)-Q_k)R_k^{-1},\quad z\in\mathbb{C}_+,
          \end{equation}
где $\widetilde{M}_k(\cdot)$ и $R_k$, $Q_k$ определены равенствами \eqref{WrWF2} и \eqref{RQ2}, соответственно.     
     \end{itemize}
        \end{proposition}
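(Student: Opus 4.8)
The plan is to obtain $\Pi$ by the same three–step procedure underlying Proposition \ref{BT1}: first build an ordinary boundary triplet on a single interval, then tensor it up to $\mathbb{C}^p$–valued functions via Lemma \ref{Lemma}, and finally assemble the \emph{regularized} direct sum and invoke the direct–sum criterion \cite[Theorem~4.1]{KosMal10}.

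First, working with $p=1$ on one interval $[x_{k-1},x_k]$, I would check that $\widetilde\Pi_k=\{\mathbb{C}^2,\widetilde\Gamma_0^k,\widetilde\Gamma_1^k\}$ with \eqref{WrBoundTr2} is a boundary triplet for $-\rd^2/\rd x^2$ on $W^{2,2}([x_{k-1},x_k])$. The Lagrange identity reduces to the boundary form $-f'(x_k)\overline{g(x_k)}+f(x_k)\overline{g'(x_k)}+f'(x_{k-1})\overline{g(x_{k-1})}-f(x_{k-1})\overline{g'(x_{k-1})}$, and a direct substitution shows this equals $(\widetilde\Gamma_1^kf,\widetilde\Gamma_0^kg)-(\widetilde\Gamma_0^kf,\widetilde\Gamma_1^kg)$; surjectivity of $(\widetilde\Gamma_0^k,\widetilde\Gamma_1^k)$ onto $\mathbb{C}^2\times\mathbb{C}^2$ is immediate since $f(x_{k-1}),f(x_k),f'(x_{k-1}),f'(x_k)$ can be prescribed independently (e.g.\ by a cubic). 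The Weyl function \eqref{WrWF2} then follows by solving $-f''=zf$, expanding $f_z$ in the basis $\cos(\sqrt z(x-x_{k-1}))$, $\sin(\sqrt z(x-x_{k-1}))/\sqrt z$, and reading off $\widetilde\Gamma_1^kf_z=\widetilde M_k(z)\widetilde\Gamma_0^kf_z$. Tensoring with $\mathcal K=\mathbb{C}^p$ and applying Lemma \ref{Lemma} upgrades $\widetilde\Pi_k$ to a boundary triplet for $H_k^*=-\frac{\rd^2}{\rd x^2}\otimes\rI_p$ with Weyl function $\widetilde M_k(z)\otimes\rI_p$ as in \eqref{WrWF2}.

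Next I would perform the regularization \eqref{eq:G=tiG} with $R_k,Q_k$ from \eqref{RQ2}. Since here $R_k=d_k^{1/2}\rI_{2p}$ is a scalar multiple of the identity and $Q_k=Q_k^*$, the map $\Gamma_0^k=R_k\widetilde\Gamma_0^k$, $\Gamma_1^k=R_k^{-1}(\widetilde\Gamma_1^k-Q_k\widetilde\Gamma_0^k)$ is a symplectic transformation, so $\Pi_k=\{\mathbb{C}^{2p},\Gamma_0^k,\Gamma_1^k\}$ remains a boundary triplet for $H_k^*$; an elementary $2p\times2p$ computation then reproduces the explicit formulas \eqref{Pi2}. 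The accompanying transformation rule for Weyl functions — if $\widetilde\Gamma_1^kf_z=\widetilde M_k\widetilde\Gamma_0^kf_z$ then $\Gamma_1^kf_z=R_k^{-1}(\widetilde M_k-Q_k)R_k^{-1}\Gamma_0^kf_z$, because $\widetilde\Gamma_0^kf_z=R_k^{-1}\Gamma_0^kf_z$ — yields \eqref{Mk2}. These are the fiberwise claims; the substance of the proposition is that the sum survives to $\ell^2(\N;\mathbb{C}^{2p})$.

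The main obstacle — and the only place where the standing assumption $d^*<\infty$ and the specific choice \eqref{RQ2} are used — is verifying the hypotheses of \cite[Theorem~4.1]{KosMal10}, which guarantee that the regularized direct sum $\Pi=\bigoplus_k\Pi_k$ is again a boundary triplet (the unregularized $\widetilde\Pi$ fails precisely when $\inf_k d_k=0$). What is needed is uniform comparability of the $M_k$ near a fixed point of $\mathbb{C}_+$. The choice $Q_k=\widetilde M_k(0)$ forces $M_k(0)=\bO_{2p}$, and expanding \eqref{WrWF2} for small argument gives $\widetilde M_k(z)-Q_k=z\,d_k\left(\begin{smallmatrix}1/3&-1/6\\-1/6&1/3\end{smallmatrix}\right)\otimes\rI_p+O(z^2)$, so after the scaling $R_k^{-1}(\,\cdot\,)R_k^{-1}=d_k^{-1}(\,\cdot\,)$ one obtains $M_k'(0)=\left(\begin{smallmatrix}1/3&-1/6\\-1/6&1/3\end{smallmatrix}\right)\otimes\rI_p$, independently of $k$. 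Combined with $\sup_kd_k=d^*<\infty$ this delivers the required uniform bounds $\sup_k\|M_k(\I)\|<\infty$ and $\sup_k\|(\imm M_k(\I))^{-1}\|<\infty$ (here $\imm M_k(\I)=R_k^{-1}\imm\widetilde M_k(\I)R_k^{-1}>0$, and as $d_k\to0$ it tends to the positive–definite matrix above, with eigenvalues $1/2$ and $1/6$). With these uniform estimates in hand, \cite[Theorem~4.1]{KosMal10} applies and yields both assertions (i) and (ii), exactly parallel to the proof of Proposition \ref{BT1}.
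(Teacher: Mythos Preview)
Your proposal is correct and follows essentially the same route as the paper: the paper's proof consists of a single sentence invoking Lemma~\ref{Lemma} and \cite[Theorem~4.1]{KosMal10}, and you have spelled out precisely the details behind that invocation --- the scalar triplet \eqref{WrBoundTr2}, the tensoring step, the regularization \eqref{eq:G=tiG} with the choice $Q_k=\widetilde M_k(0)$, and the uniform estimates on $M_k$ (via $M_k'(0)$ being $k$-independent) needed to apply the direct-sum criterion. Your expansion of $\widetilde M_k(z)-Q_k$ and the resulting eigenvalues $1/2,\,1/6$ of $M_k'(0)$ are accurate.
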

 
 Функция Вейля \eqref{Mk2} обладает важным свойством.
 
 \begin{corollary}\label{WFConv}
Пусть $M(\cdot)$ -- функция Вейля из Предложения \ref{BT2}.
Тогда 
\begin{equation}\label{IV.1.1_10}
M(x)\rightrightarrows -\infty \quad\text{при} \quad x\to-\infty.
\end{equation}
\end{corollary}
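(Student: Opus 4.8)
The plan is to exploit the block-diagonal structure $M(\cdot)=\bigoplus_{k=1}^{\infty}M_k(\cdot)$ of \eqref{Mk2}: since in \eqref{RQ2} the matrix $R_k=\sqrt{d_k}\,\rI_{2p}$ is scalar and $Q_k$ is a multiple of $\left(\begin{smallmatrix}1&1\\1&1\end{smallmatrix}\right)\otimes\rI_p$, every $M_k(x)$ is a symmetric $2\times2$ matrix tensored with $\rI_p$, and $M(x)<-a\rI_{\cH}$ holds exactly when both eigenvalues of each scalar block $M_k(x)$ lie below $-a$. So it suffices to prove that these eigenvalues tend to $-\infty$ as $x\to-\infty$ \emph{uniformly} in $k$. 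First I would set $x=-t^2$, $t>0$, so that $\sqrt{x}=\I t$ and \eqref{WrWF2} becomes the hyperbolic expression $\widetilde M_k(x)=-\tfrac{t}{\sinh(td_k)}\left(\begin{smallmatrix}\cosh(td_k)&1\\1&\cosh(td_k)\end{smallmatrix}\right)\otimes\rI_p$.

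Next I would diagonalize. The block $\widetilde M_k(x)-Q_k$ has the form $\left(\begin{smallmatrix}a&b\\b&a\end{smallmatrix}\right)\otimes\rI_p$ with common eigenvectors $(1,\pm1)$, and conjugation by $R_k^{-1}=d_k^{-1/2}\rI_{2p}$ merely rescales by $d_k^{-1}$. Using the half-angle identities $\tfrac{\cosh s+1}{\sinh s}=\coth\tfrac s2$ and $\tfrac{\cosh s-1}{\sinh s}=\tanh\tfrac s2$ with $s=td_k$, the two eigenvalues of $M_k(x)$ (each of multiplicity $p$) come out as
\[
\lambda_+(x,k)=-\frac{t}{d_k}\Big(\coth\frac{td_k}{2}-\frac{2}{td_k}\Big),\qquad
\lambda_-(x,k)=-\frac{t}{d_k}\tanh\frac{td_k}{2}.
\]
The decisive feature is that the singular term $2d_k^{-2}$ coming from $Q_k=\widetilde M_k(0)$ exactly cancels the $2d_k^{-2}$ produced by the principal part $\coth(td_k/2)\sim 2/(td_k)$, leaving the manifestly negative quantities above.

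Both eigenvalues then diverge to $-\infty$ uniformly in $k$ by a two-regime estimate that uses only $d_k\le d^*<\infty$ from \eqref{eq:d*<}. For $td_k\ge1$ the factors $\tanh(td_k/2)$ and $\coth(td_k/2)-2/(td_k)$ are bounded below by the fixed positive constants $\tanh\tfrac12$ resp.\ $\coth\tfrac12-2$ (monotonicity of $\tanh$ and of $y\mapsto\coth y-1/y$), while $t/d_k\ge t/d^*$, so $\lambda_\pm\le-c\,t/d^*$. For $td_k<1$ both factors are bounded below by a constant multiple of $td_k$, since $\tanh y/y$ and $(\coth y-1/y)/y$ have positive limits as $y\downarrow0$ and are continuous on $(0,\tfrac12]$; this gives $\lambda_\pm\le-c_0\,t^2$. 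In either regime $\max\{\lambda_+,\lambda_-\}\to-\infty$ as $t\to\infty$ at a rate independent of $k$, so for every $a>0$ there is $t_a>0$ with $M_k(-t^2)\le-a\rI_{2p}$ for all $k$ and all $t>t_a$; this is precisely $M(x)\rightrightarrows-\infty$.

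The main obstacle, and the only genuinely delicate point, is the uniformity in $k$ when $d_k\to0$ — exactly the situation of Proposition \ref{BT2}, where $\inf_k d_k=0$. There each block carries the blow-up term $2d_k^{-2}$, and without the cancellation noted above one could not control $\lambda_+$ uniformly; the split at $td_k=1$ is what turns this cancellation into the two clean bounds $-c\,t/d^*$ and $-c_0t^2$.
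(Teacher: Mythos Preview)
Your proof is correct. The paper's own proof takes a shortcut you did not: it observes that, since $R_k=\sqrt{d_k}\,\rI_{2p}$ and both $\widetilde M_k(z)$ and $Q_k$ have the form $(\text{scalar }2\times2)\otimes\rI_p$, the full Weyl function factors as $M(\cdot)=m(\cdot)\otimes\rI_p$, where $m(\cdot)$ is exactly the Weyl function of the same triple in the scalar case $p=1$; it then simply invokes \cite[Proposition~4.4]{KosMal10}, which already contains the statement $m(x)\rightrightarrows-\infty$. Your argument makes the same tensor reduction implicitly (when you diagonalize the $2\times2$ block) but then supplies a self-contained proof of the cited result: the explicit eigenvalue formulas via the half-angle identities, the cancellation of the $2/d_k^2$ term against the pole of $\coth$, and the two-regime bound split at $td_k=1$. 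What you gain is independence from the external reference and a transparent explanation of \emph{why} the regularization $M_k=R_k^{-1}(\widetilde M_k-Q_k)R_k^{-1}$ is exactly what is needed to make the convergence uniform when $\inf_kd_k=0$; what the paper's approach buys is brevity.
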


\begin{proof} В силу  \eqref{WrWF2} и \eqref{RQ2} функция Вейля \eqref{Mk2} допускает
представление  $M(\cdot) = m(\cdot)\otimes \rI_p$, в котором $m(\cdot)$ -- функция Вейля
тройки \eqref{Pi2} при $p=1$.  Но, согласно \cite[Предложение 4.4]{KosMal10},
$m(x)\rightrightarrows -\infty$ при $x\to-\infty$. \end{proof}

Рассмотрим теперь матрицу
     \begin{equation}\label{B2}
    B_{X,\gA}=\begin{pmatrix}
    \frac{1}{d_1d_2} \rI_p + \frac{1}{d_1+d_2}\gA_1 & \frac{-1}{r_1r_2d_2}\rI_p& \bO_p&   \dots\\[2mm]
    \frac{-1}{r_1r_2d_2}\rI_p &  \frac{1}{d_2d_3} \rI_p + \frac{1}{d_2+d_3}\gA_2& \frac{-1}{r_2r_3d_3}\rI_p &  \dots\\[2mm]
    \bO_p & \frac{-1}{r_2r_3d_3}\rI_p &   \frac{1}{d_3d_4} \rI_p + \frac{1}{d_3+d_4}\gA_3 &  \dots\\
    \dots & \dots& \dots & \dots
    \end{pmatrix}.
         \end{equation}
 Как и ранее, минимальный и преминимальный операторы, ассоциированные с этой матрицей в $\ell^2(\N;\mathbb{C}^{p})$, будем обозначать $B_{X, \gA}$ и $B_{X, \gA}^0$, соответственно. 
        
 \begin{proposition}\label{BoundOp2}
   Пусть $\Pi=\{\mathcal{H}, \Gamma_0,\Gamma_1\}$ -- граничная тройка для оператора $H_{\min}^*$, построенная в Предложении \ref{BT2}. 
   Тогда
   \begin{equation}\label{HB2}
   \dom (H_{X,\gA}) = \left\{f\in W^{2,2}(\mathbb{R}_+\setminus X;\mathbb{C}^p):\, \{\Gamma_0f,\Gamma_1f\}\in \Theta_{X,\gA}\right\},
   \end{equation}
      где $\Theta_{X,\gA}$ -- симметрическое линейное отношение, операторная часть которого $\Theta_{X,\gA}^{\rm op}$ унитарно эквивалентна минимальному оператору \eqref{B2}.
   \end{proposition}
   
   \begin{proof} Пусть $\widetilde{\Gamma}_j := \bigoplus_{k\in\N} \widetilde{\Gamma}_j^{k}$, $j\in \{0,1\}$, где $\widetilde{\Gamma}_j^k$ определены в \eqref{WrBoundTr2}. Очевидно, что $\widetilde{\Gamma}_jf \in \ell^2_0(\N;\mathbb{C}^{2p})$ для всех $f\in \dom(H_{X,\gA}^0) $.  Положим 
\[  
 C := 
    \begin{pmatrix}
      \bO_p & \bO_p & \bO_p & \bO_p& \bO_p  & \dots\\
      \bO_p & \bO_p& \bO_p & \bO_p & \bO_p&  \dots\\
      \bO_p & -\rI_p & \rI_p &   \bO_p& \bO_p& \dots\\
      \bO_p & \bO_p & \bO_p & \bO_p & \bO_p&  \dots\\
      \bO_p & \bO_p & \bO_p & -\rI_p & \rI_p&  \dots\\
    \dots& \dots&\dots&\dots&\dots&\dots\\
     \end{pmatrix},
               \quad D_{\gA} := 
    \begin{pmatrix}
      \rI_p& \bO_p & \bO_p & \bO_p& \bO_p  & \dots\\
      \bO_p & \rI_p& \rI_p& \bO_p & \bO_p&  \dots\\
      \bO_p & \bO_p & \gA_1& \bO_p& \bO_p& \dots\\
      \bO_p & \bO_p & \bO_p & \rI_p & \rI_p&  \dots\\
      \bO_p & \bO_p & \bO_p & \bO_p & \gA_2&  \dots\\
    \dots& \dots&\dots&\dots&\dots&\dots\\
     \end{pmatrix}.    
\] 
Нетрудно видеть, что $f\in \dom(H_{X,\gA}^0) $ в точности тогда, когда $C\widetilde{\Gamma}_1 f = D_\gA \widetilde{\Gamma}_0 f$. Поэтому,  полагая 
\[
C_{X,\gA}: = CR_X,\qquad  D_{X,\gA}:=(D_{\gA}-CQ_X)R_X^{-1},
\] 
где $R_X= \bigoplus_{k=1}^{\infty}R_k$, $Q_X=\bigoplus_{k=1}^{\infty}Q_k$, а $R_k$ и $Q_k$ определены в \eqref{RQ2}, и учитывая \eqref{eq:G=tiG}, мы заключаем, что $f\in \dom(H_{X,\gA}^0) $ в точности тогда, когда 
\be
C_{X,\gA}{\Gamma}_1 f = D_{X,\gA} {\Gamma}_0 f,
\ee
где $\Gamma_0$ и $\Gamma_1$ -- отображения, построенные в Предложении \ref{BT2}.
     
  Определим теперь линейное отношение $\Theta_{X,\gA}^0$ в $\ell^2(\N;\mathbb{C}^{p})$ 
 \[
   \Theta_{X,\gA}^0=\big\{\{f,g\} \in \ell^2_0(\mathbb{N}; \mathbb{C}^p)\times \ell^2_0(\mathbb{N}; \mathbb{C}^p):\, D_{X,\gA}f=C_{X,\gA}g\big\}.
\]
Во-первых, $\dom(H_{X,\gA}^0) = \{f\in W^{2,2}_{\comp}(\R_+\setminus X;\mathbb{C}^p):\, \{\Gamma_0f,\Gamma_1f\}\in \Theta_{X,\gA}^0\}$. Также легко убедиться в том, что $\Theta_{X,\gA}^0$ -- симметрическое линейное отношение. Обозначим его замыкание через $\Theta_{X,\gA}: =\overline{\Theta_{X,\gA}^0}$. Тогда $\Theta_{X,\gA}$ -- симметрическое и, следовательно, для него справедливо разложение $\Theta_{X,\gA}=\Theta_{X,\gA}^{\rm op}\oplus\Theta_{X,\gA}^{\infty}$, в котором $\Theta_{X,\gA}^{\rm op}$ -- операторная часть $\Theta_{X,\gA}$, а $\Theta_{X,\gA}^{\infty} = \{0\}\times \mul(\Theta_{X,\gA})$.

Из определения находим
\[
 \mul(\Theta_{X,\gA}) = \ol{ \mul(\Theta_{X,\gA}^0)} = \ker{C_{X,\gA}} = \ol{R_X^{-1}\ker(C)}
\]
и 
\[
 \cH_{\rm op} = \ol{\dom(\Theta_{X,\gA})} = \ol{ \ran(C_{X,\gA}^*)}.
 \]
Следовательно,  $f = \{f_k\}_{k\in\N} \in\cH_{\rm op}$ в точности тогда, когда $f_1=0$ и $r_{k+1} f_{2k} + r_k f_{2k+1} =0$ для всех $k\in\N$. Пусть ${\bf f}_k $ имеет следующий вид
\[
{\bf f}_k = (0,0,\dots, 0,  \underbrace{-\sqrt{d_k} f_k}_{2k}, \underbrace{\sqrt{d_{k+1}} f_k}_{2k+1},0,\dots),\quad f_k\in\mathbb{C}^p.
\]
Очевидно, что ${\bf f}_k \perp {\bf f}_n$, если $k\neq n$. Определим  $\cH_k:= \{{\bf f}_k:\, f_k\in\mathbb{C}^p\}$ для каждого $k\in\N$. Очевидно, что $\dim \cH_k = p$ для всех $k\in\N$. Более того $\cH_{\rm op} = \bigoplus_{k\in\N} \cH_k$. Обозначим через $P_k$ -- ортопроектор в $\ell^2(\N;\mathbb{C}^p)$ на подпространство $\cH_k$.  

Покажем, что ${\bf f}_k\in \dom(\Theta_{X,\gA})$ для всех $k\in\N$. Для этого достаточно указать ${\bf g}_k = \{{\bf g}_{k,j}\}_{j\in\N}\in \ell^2(\N;\mathbb{C}^p)\times \ell^2(\N;\mathbb{C}^p)$ такой, что $\{{\bf f_k},{\bf g}_k\}\in \Theta_{X,\gA}$, то есть 
\be\label{eq:fg}
C_{X,\gA}{\bf g}_k = D_{X,\gA}{\bf f}_k.
\ee 
Сначала заметим, что 
\[
D_{X,\gA}{\bf f}_1 = \big(0,0,\ \gA_1 f_1 + (d_1^{-1} + d_{2}^{-1})f_1\ , 0,\ d_2^{-1}f_1\,0,0,\dots \big)
\]
и
\[
D_{X,\gA}{\bf f}_k =\big(0,\dots,0,\, \underbrace{d_{k}^{-1}f_k}_{2k-1}\,,\, 0\, ,\, \underbrace{\gA_k f_k + (d_k^{-1} + d_{k+1}^{-1})f_k}_{2k+1}\,,\,0\,,\,\underbrace{d_{k+1}^{-1}f_k}_{2k+3}\,,\,0,\dots\big),
\]
для всех $k\ge 2$. 
Так как
\[
C_{X,\gA}{\bf g}_k = CR_X {\bf g}_k = (0,0, -\sqrt{d_1}{\bf g}_{k,2} + \sqrt{d_2}{\bf g}_{k,3},0, -\sqrt{d_2}{\bf g}_{k,4} + \sqrt{d_3}{\bf g}_{k,5}, 0, \dots),
\]
то из системы уравнений \eqref{eq:fg} мы заключаем, что ${\bf g}_k$ имеет вид ${\bf g}_k = \tilde{{\bf g}}_k + {\bf g} $, где ${\bf g}\in \mul(\Theta_{X,\gA})$, $\tilde{{\bf g}}_k \in \cH_{\rm op}$ и 
\begin{align*}
P_{k-1}\tilde{{\bf g}}_k = \frac{-1}{r_{k-1}d_k} f_k,\quad  P_{k+1}\tilde{{\bf g}}_k = \frac{-1}{r_{k+1}d_{k+1}} f_k,\\
P_{k}\tilde{{\bf g}}_k = r_k\left(\frac{1}{d_kd_{k+1}} f_k + \frac{1}{d_k+d_{k+1}}\gA_k f_k\right),
\end{align*}
и $P_j \tilde{{\bf g}}_k = 0$ для всех $|j-k|>1$. Из этого также следует, что $\{{\bf f}_k, \tilde{{\bf g}}_k\} \in \Theta_{X,\gA}^{\rm op}$ и кроме того, матричное представление $\Theta_{X,\gA}^{\rm op}$ относительно разложения $\cH_{\rm op} = \bigoplus_{k\in\N} \cH_k$ задается блочной якобиевой матрицей  \eqref{B2}. 
\end{proof}

\subsection{Индексы дефекта оператора $H_{X,\gA}$}\label{sec:defind}

    В \cite{KosMal101,KosMal10} показано, что в скалярном случае спектральные свойства оператора  $H_{X,\gA}$ тесно связаны со свойствами специальных матриц Якоби. Здесь мы проследим эту аналогию для матричного случая. Следующий результат обобщает Теорему 5.4 из \cite{KosMal10}  на случай $p>1$.
      
\begin{theorem}\label{MK1}
Пусть $B_{X,\gA}$ -- минимальный оператор, задаваемый одной из матриц \eqref{B1} или \eqref{B2}. Тогда  
\be\label{eq:defindH}
n_{\pm}(H_{X,\gA})=n_{\pm}(B_{X,\gA}) \le p.
\ee
В частности, индексы дефекта минимальных операторов  \eqref{B1} и \eqref{B2} совпадают.
\end{theorem}

\begin{proof} Вытекает из сопоставления Предложений  \ref{BoundOp} и \ref{BoundOp2} с Предложением \ref{L:3.2}(iv).
\end{proof}

\begin{remark}
При дополнительном условии вещественности матриц $\gA_k = \gA_k^*\in \R^{p\times p}$ в \cite[Теорема 3]{MirSaf11} было установлено другим методом, что $n_\pm(H_{X,\gA}) = p$ в точности тогда, когда $n_\pm(B_{X,\gA}) = p$, где $B_{X,\gA}$ -- матрица вида \eqref{B2}.
\end{remark}

Применяя известные результаты об индексах дефекта блочных матриц Якоби, Теорема \ref{MK1} позволяет изучать индексы дефекта оператора $H_{X,\gA}$. Вообще говоря, индексы дефекта блочных матриц Якоби с невещественными матричными коэффициентами не обязательно равны (см., например, \cite{Dyuk06, Dyuk10}). Однако, они равны, если хотя бы один из них максимален, то есть равен $p$ (см. \cite{Kog70}).
Из этого факта и Теоремы \ref{MK1} вытекает следующее утверждение.

\begin{corollary}\label{cor:maxdef}
Если $n_+(H_{X,\gA}) = p$ или $n_-(H_{X,\gA}) = p$, то 
\be\label{eq:maxdef}
n_+(H_{X,\gA}) = n_-(H_{X,\gA}) =p.
\ee
\end{corollary}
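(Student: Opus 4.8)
The plan is to discard the differential-operator picture entirely and work with the block Jacobi matrix $B_{X,\gA}$, after which the hypothesis becomes the matrix analogue of Weyl's limit-circle case. First I would invoke Theorem \ref{MK1}: since $n_\pm(H_{X,\gA})=n_\pm(B_{X,\gA})$, it suffices to prove the implication for $B_{X,\gA}$ acting in $\ell^2(\N;\mathbb{C}^p)$. I would then describe $\ker(B_{X,\gA}^*-z)$ via the block three-term recurrence $(B_{X,\gA}^*-z)u=z u$. Because the first block row of \eqref{B2} contains no term $u_0$, a solution is uniquely determined by its first block component $u_1\in\mathbb{C}^p$; hence for every $z$ the solution space $S(z)$ of the recurrence is exactly $p$-dimensional, and $n_+(B_{X,\gA})=\dim\bigl(S(\I)\cap\ell^2(\N;\mathbb{C}^p)\bigr)$, $n_-(B_{X,\gA})=\dim\bigl(S(-\I)\cap\ell^2(\N;\mathbb{C}^p)\bigr)$. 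In particular $n_\pm\le p$, in agreement with \eqref{eq:defindH}.

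Next I would read off the meaning of the extremal hypothesis. The condition $n_+(B_{X,\gA})=p=\dim S(\I)$ forces $S(\I)\subset\ell^2(\N;\mathbb{C}^p)$, i.e. \emph{every} solution of the recurrence at $z=\I$ is square summable; this is precisely the matrix limit-circle situation. The corollary therefore reduces to the assertion that this property is independent of the non-real spectral parameter: if all solutions are square summable at $z=\I$, then the same holds at $z=-\I$, whence $n_-(B_{X,\gA})=p$. The hypothesis $n_-=p$ is handled symmetrically, so together this yields \eqref{eq:maxdef}.

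To establish the parameter-independence I would use variation of parameters. Fixing a fundamental matrix solution $U$ of the recurrence at $z=\I$, whose columns all lie in $\ell^2$ by hypothesis, I would represent any solution $v$ at $z=-\I$ as $v=U\xi+(\text{discrete Green operator built from }U)\,v$, the correction being proportional to the spectral shift between $-\I$ and $\I$. The point to verify is that this discrete Green operator maps $\ell^2$ into $\ell^2$, so that the resulting Neumann-type series converges in $\ell^2(\N;\mathbb{C}^p)$ and $v\in\ell^2$. This is exactly the matrix limit-circle invariance of \cite{Kog70}, which I would invoke at this step; alternatively one proves the boundedness by hand, using the constancy in $k$ of the block Wronskian $A_k[u,w]=\langle c_k u_{k+1},w_k\rangle-\langle u_k,c_k w_{k+1}\rangle$ for $u\in S(\I)$, $w\in S(-\I)$ (the off-diagonal blocks $c_k$ being the scalar coefficients of \eqref{B2}) together with the Cauchy–Schwarz inequality.

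The main obstacle is this last step. The off-diagonal coefficients of $B_{X,\gA}$ need not be bounded, since they blow up wherever $d_k\to0$, so boundedness of the discrete Green operator on $\ell^2(\N;\mathbb{C}^p)$ is not automatic and is the genuine content of the matrix limit-circle invariance; this is precisely where \cite{Kog70} does the work. Everything else — the reduction through Theorem \ref{MK1} and the identification of $n_\pm=p$ with the limit-circle case — is routine bookkeeping.
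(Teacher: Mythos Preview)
Your proposal is correct and follows exactly the paper's route: reduce to the block Jacobi matrix $B_{X,\gA}$ via Theorem~\ref{MK1}, then invoke \cite{Kog70} for the fact that a maximal deficiency index of a $J_p$-matrix forces both to equal~$p$. The extra variation-of-parameters sketch you give is a legitimate outline of what \cite{Kog70} proves, but the paper simply cites that result without reproving it.
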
 
\begin{corollary}\label{Karl} 
Если 
\[
\sum_{k=1}^{\infty}d_k^2=\infty,
\]
то оператор $H_{X,\gA}$ самосопряжен. 
\end{corollary}

\begin{proof} Применяя матричный признак Карлемана (см., например, \cite[Теорема 7.2.9]{Ber65}) к оператору \eqref{B1} и отмечая, что 
\[
\sum_{k=1}^{\infty} d_k^2+d_k^{3/2}d_{k+1}^{1/2} \ge\sum_{k=1}^{\infty}d_k^2=\infty,
\] 
 получаем $B_{X,\gA} =B_{X,\gA}^*$. Остается применить Теорему \ref{MK1}. 
\end{proof}

Используя признак Березанского--Костюченко--Мирзоева (см. \cite[Теорема 1]{KosMir01}),  можно получить условия максимальности индексов дефекта.

\begin{proposition}\label{KosMir} 
Пусть $\{d_k\}_{k=1}^{\infty} \in \ell^2(\mathbb{N})$ и 
	\begin{equation}\label{d}
	d_{k}d_{k+2}\ge d_{k+1}^2
	\end{equation}
	для всех $ k \in \mathbb{N}$. 
Если 
	\begin{equation}\label{alpha}
	\sum_{k=1}^{\infty}d_{k+1}\left\|\gA_k+\big({d_k^{-1}} + {d_{k+1}^{-1}}\big)\rI_p\right\|_{\mathbb{C}^p}<\infty,
	\end{equation}
то $n_{\pm}(H_{X,\gA})=p$.
\end{proposition}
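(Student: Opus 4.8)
The plan is to reduce everything to a statement about the Jacobi matrix $B_{X,\gA}$ and then invoke the Berezanskii--Kostyuchenko--Mirzoev criterion. By Theorem \ref{MK1} we have $n_\pm(H_{X,\gA}) = n_\pm(B_{X,\gA}) \le p$, so it suffices to prove that the block Jacobi matrix in the form \eqref{B2} is in the completely indefinite (limit-circle) case, i.e. $n_\pm(B_{X,\gA}) = p$. First I would record its coefficients: the off-diagonal blocks are the scalar multiples $a_k = -\frac{1}{r_kr_{k+1}d_{k+1}}\rI_p$ and the diagonal blocks are $b_k = \frac{1}{d_kd_{k+1}}\rI_p + \frac{1}{d_k+d_{k+1}}\gA_k$. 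The key algebraic observation is the identity $\frac{1}{d_kd_{k+1}} = \frac{1}{d_k+d_{k+1}}\big(d_k^{-1}+d_{k+1}^{-1}\big)$, which lets me rewrite $b_k = \frac{1}{d_k+d_{k+1}}\big(\gA_k + (d_k^{-1}+d_{k+1}^{-1})\rI_p\big)$, so that $\|b_k\| = \frac{1}{d_k+d_{k+1}}\big\|\gA_k + (d_k^{-1}+d_{k+1}^{-1})\rI_p\big\|$.

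With the coefficients in this form I would verify the hypotheses of \cite[Theorem~1]{KosMir01} one by one. Since all $a_k$ are negative scalar multiples of $\rI_p$, a preliminary similarity by the diagonal unitary $\diag((-1)^k\rI_p)$ turns them into positive multiples of $\rI_p$ without changing $n_\pm$, so the criterion applies in its standard (positive off-diagonal) form. For the Carleman-converse summability of the off-diagonal part, note that $\|a_k^{-1}\| = r_kr_{k+1}d_{k+1}$ and, by the arithmetic--geometric mean inequality, $r_kr_{k+1} \le \tfrac12(r_k^2+r_{k+1}^2) = \tfrac12(d_k+2d_{k+1}+d_{k+2})$; hence $\sum_k \|a_k^{-1}\| \le \tfrac12\sum_k (d_kd_{k+1}+2d_{k+1}^2+d_{k+1}d_{k+2})$, which converges because $\{d_k\}\in\ell^2(\N)$. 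For the diagonal smallness condition I would substitute the expression for $\|b_k\|$ above: the factor $d_k+d_{k+1}$ in its denominator cancels against the corresponding weight of the criterion, leaving precisely the series $\sum_k d_{k+1}\big\|\gA_k + (d_k^{-1}+d_{k+1}^{-1})\rI_p\big\|$ of \eqref{alpha}, assumed finite. Because the off-diagonal blocks are scalar multiples of $\rI_p$, the block three-term recurrence decouples along the off-diagonal and the non-commutativity of the $\gA_k$ enters only through the diagonal, fully controlled by the operator-norm series \eqref{alpha}.

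The remaining and most delicate hypothesis is the regularity of the off-diagonal coefficients, and this is exactly what condition \eqref{d} supplies: $d_kd_{k+2}\ge d_{k+1}^2$ says that $\{d_k\}$ is log-convex, and combined with $\{d_k\}\in\ell^2$ (so $d_k\to0$) it forces $\{d_k\}$ to be non-increasing with non-decreasing ratios $d_{k+1}/d_k$. I expect this to be the main obstacle, since $\sum_k\|a_k^{-1}\|<\infty$ by itself is known to be insufficient for the limit-circle case: one must use the monotonicity coming from \eqref{d} to guarantee that \emph{all} solutions of the recurrence (and not merely one) are square-summable, thereby excluding the resonant behaviour that would drop the deficiency index below $p$. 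Once the three hypotheses are checked, \cite[Theorem~1]{KosMir01} yields $n_\pm(B_{X,\gA}) = p$, and Theorem \ref{MK1} then gives $n_\pm(H_{X,\gA}) = p$, as claimed.
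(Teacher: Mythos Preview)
Your overall strategy is the same as the paper's: reduce to the block Jacobi matrix $B_{X,\gA}$ via Theorem~\ref{MK1} and apply \cite[Theorem~1]{KosMir01}. The gap is that you have not stated the criterion correctly, and the conditions you actually verify are not the ones it requires. The condition $\sum_k\|a_k^{-1}\|<\infty$ is \emph{not} a hypothesis of \cite[Theorem~1]{KosMir01}; as you yourself note, it is neither sufficient nor the right object. What the theorem asks for is the construction of an auxiliary sequence $\{C_k\}\subset\mathbb{C}^{p\times p}$ satisfying the two-step recursion $C_{k+1}=-B_k^{-1}B_{k-1}C_{k-1}$ (with $B_k$ the off-diagonal blocks and $A_k$ the diagonal ones), together with the two summability conditions
\[
\sum_{k}\|C_k\|^2<\infty\qquad\text{and}\qquad \sum_{k}\|C_k^*A_kC_k\|<\infty.
\]
Everything in the proof hinges on estimating $\|C_k\|$.

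This is exactly where condition~\eqref{d} enters, and not in the vague ``regularity'' sense you suggest. Since $B_k=-\frac{1}{r_kr_{k+1}d_{k+1}}\rI_p$ is scalar, the recursion gives $C_{k+1}=(-1)^{k+1}r_{k+1}\,\frac{d_{k+1}d_{k-1}\cdots}{d_kd_{k-2}\cdots}\,C_{1\ \text{or}\ 2}$ up to a harmless factor, and the alternating product is controlled by the telescoping estimate
\[
\frac{d_{k+1}d_{k-1}\cdots}{d_kd_{k-2}\cdots}
=\sqrt{d_{k+2}}\cdot\frac{d_{k+1}}{\sqrt{d_{k+2}d_k}}\cdot\frac{d_{k-1}}{\sqrt{d_kd_{k-2}}}\cdots
\le c\,\sqrt{d_{k+2}},
\]
each factor being $\le 1$ precisely by~\eqref{d}. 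This yields $\|C_{k+1}\|\le c\,r_{k+1}\sqrt{d_{k+2}}\le c(d_{k+1}+d_{k+2})$, hence $\sum\|C_k\|^2<\infty$ from $\{d_k\}\in\ell^2$, and then $\sum\|C_k^*A_kC_k\|\le\sum\|C_k\|^2\|A_k\|\le c^2\sum d_{k+1}r_k^2\|A_k\|$, which is exactly the series in~\eqref{alpha}. Your ``cancellation of the weight $d_k+d_{k+1}$'' is this last step, but it only makes sense once $\|C_k\|$ has been bounded, and that is the part your proposal is missing.
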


\begin{proof}
Покажем, что для матрицы $B_{X,\gA}$ вида \eqref{B2} выполнены условия Теоремы 1 из \cite{KosMir01}. Для этого обозначим 
\begin{equation}\label{AB}
A_k:= \frac{1}{d_k d_{k+1}}\rI_p + \frac{1}{d_k + d_{k+1}} \gA_k, \quad B_k:=\frac{-1}{r_kr_{k+1}d_{k+1}}\rI_p, \quad k\in\mathbb{N},
\end{equation}
и определим последовательность матриц  $\{C_k\}_{k=1}^{\infty} \subset \mathbb{C}^{p\times p}$ следующим образом
\[
C_1:=B_1^{-1}, \quad C_2:=\rI_p, \qquad C_{k+1}:= - B_{k}^{-1} B_{k-1} C_{k-1} = - \frac{r_{k+1}d_{k+1}}{r_{k-1} d_k} C_{k-1},\qquad k\ge 2. 
\]
Легко видеть, что
\[
C_{k+1}=(-1)^{k+1}r_{k+1}\frac{d_{k+1}\ d_{k-1}\cdot\dots}{d_{k}\ d_{k-2}\cdot\dots}\times\begin{cases}
                                           {r^{-1}_1}B_1^{-1}, & k=2n+1,\\
                                           {r^{-1}_2}\rI_p, & k=2n.
                                            \end{cases},\quad .
\]
Из условия \eqref{d} приходим к неравенству
\begin{equation*} 
\frac{d_{k+1}\ d_{k-1}\cdot\dots}{d_{k}\
d_{k-2}\cdot\dots}=
\sqrt{d_{k+2}}\frac{d_{k+1}}{\sqrt{d_{k+2}d_{k}}}\frac{d_{k-1}}{\sqrt{d_{k}d_{k-2}}}\times \dots\times c_k  
 \le  \frac{\sqrt{d_{k+2}}}{\min\{\sqrt{d_2},\sqrt{d_3}\}}, 
\end{equation*}
для всех $k\in\N$, где   
\[
c_k = \begin{cases} 1/\sqrt{d_2}, & k=2n,\\ 1/\sqrt{d_3}, & k = 2n+1. \end{cases}
\]
Следовательно 
  \begin{equation}\label{norm}
   \|C_{k+1}\|\leq c\, r_{k+1} \sqrt{d_{k+2}} = c \sqrt{d_{k+2}d_{k+1}+d_{k+2}^2}  \leq
{c}(d_{k+1} + d_{k+2}),
     \end{equation}
     где $c>0$ -- некоторая константа, независящая от $k\in\N$. 
Во-первых, из этих неравенств, а также условия $\{d_k\}_{k\in\N} \in \ell^2(\N)$ мы получаем 
\[
\sum_{k=1}^\infty \|C_k\|^2 <\infty.
\]
Кроме того, из $\eqref{norm}$ с учетом условия $\eqref{alpha}$ имеем
    \begin{align*}
\sum_{k=1}^\infty \|C_k^*A_k C_k\| 
 \le  \sum_{k=1}^\infty \|C_k\|^2 \|A_k\|   
 &\le {c^2}\sum_{k=1}^\infty d_{k+1}r_{k}^2\|A_k\|\\
 = & {c}^2 \sum_{k=1}^\infty d_{k+1}\|\gA_k +
(d_k^{-1} + d_{k+1}^{-1})\rI_p\| <\infty.
 \end{align*}
Таким образом, $n_\pm(B_{X,\gA}) = p$ по Теореме 1 из \cite{KosMir01}. Осталось применить Теорему \ref{MK1}.
\end{proof}

\begin{remark}
Cледствие \ref{Karl} и Предложение \ref{KosMir} в скалярном случае $(p=1)$ доказаны в \cite{KosMal10}. В  случае  $p>1$ эти результаты получены другим методом в \cite{MirSaf11} при дополнительном условии вещественности матриц $\gA_k$.
\end{remark}

Приведем теперь примеры операторов $H_{X,\gA}$ таких, что $n_\pm(H_{X,\gA}) = p_1$ с $0<p_1<p$. Пусть $p=p_1 + p_2$ с некоторыми натуральными $p_1$ и $p_2$. Представим каждую матрицу $\gA_k \in \mathbb{C}^{p\times p}$ в виде
\be\label{eq:block}
\gA_k = \begin{pmatrix} \gA_k^{11} & \gA_k^{12} \\ \gA_k^{21} & \gA_k^{22} \end{pmatrix},\quad \gA_k^{ij} \in \mathbb{C}^{p_i\times p_j},\quad i,j\in \{1,2\}.
\ee 

\begin{proposition}\label{VarIndices}
Пусть $\{d_k\}_{k=1}^{\infty}\in \ell^2(\mathbb{N})$ -- невозрастающая последовательность такая, что для всех $k\in\N$ выполнено условие $\eqref{d}$.  Пусть также  матрицы $\gA_k=\gA_k^*$ удовлетворяют следующим условиям:  
\begin{itemize}
\item[(i)] матрицы $\gA_k^{11}$, $k\in \N$ удовлетворяют условию \eqref{alpha} с $p_1$ вместо $p$;
\item[(ii)] 
\[
\|\gA_k^{12}\|_{\mathbb{C}^{p_1\times p_2}} = \mathcal{O}(d_k) , \quad k\to \infty;
\]
\item[(iii)] 
\[
\gA_k^{22} = \widehat{\gA}_k^{22} + \widetilde{\gA}_k^{22}, 
\]
причем $\|\widetilde{\gA}_k^{22}\|_{\mathbb{C}^{p_2\times p_2}} = \mathcal{O}(d_k)$ при $k\to \infty$, а $\widehat{\gA}_k^{22} = \diag(\alpha_{k,j})_{j=1}^{p_2}$ -- диагональные матрицы такие, что либо
\be\label{eq:DW}
\sum_{k=1}^\infty |\alpha_{k,j}|d_{k}^3 = \infty,\qquad j\in \{1,\dots,p_2\},
\ee
либо найдется постоянная $M>0$ такая, что для всех $k\in\N$ и $j\in \{1,\dots,p_2\}$
\be\label{eq:Wouk}
\frac{4}{d_{k+1}^2} + \frac{\alpha_{k,j}}{d_{k+1}} \le M. 
\ee
\end{itemize}
Тогда $n_{\pm}(H_{X,\gA})=p_1$.
\end{proposition}

\begin{proof} Рассмотрим якобиеву матрицу \eqref{B2}. Из условий (ii) и (iii) следует, что $B_{X,\gA}$ является ограниченным возмущением матрицы $B_{X,\widehat{\gA}}$, в которой
\[
\widehat{\gA}_k = \gA_k^{11} \oplus \widehat{A}_k^{22},\qquad k\in\N.
\]
Так как $n_\pm(B_{X,\gA}) = n_{\pm}(B_{X,\widehat{\gA}})$, то, согласно Теореме \ref{MK1}, $n_\pm(H_{X,\gA}) = n_{\pm}(H_{X,\widehat{\gA}})= n_{\pm}(B_{X,\widehat{\gA}})$. Заметим, что оператор $H_{X,\widehat{\gA}}$  является прямой суммой операторов $H_{X,{\gA}^{11}}$ и $H_{X,\widehat{\gA}^{22}}$ относительно разложения $L^2(\R_+;\mathbb{C}^p) =L^2(\R_+;\mathbb{C}^{p_1})\oplus L^2(\R_+;\mathbb{C}^{p_2})$. Значит, 
\[
n_\pm(H_{X,\gA}) = n_\pm(H_{X,{\gA}^{11}}) + n_\pm(H_{X,\widehat{\gA}^{22}}).
\]
Применяя Предложение \ref{KosMir} к оператору $H_{X,{\gA}^{11}}$, получаем 
\[
n_\pm(H_{X,{\gA}^{11}}) = p_1.
\]

Осталось доказать, что оператор $H_{X,\widehat{\gA}^{22}}$ самосопряжен, то есть $n_\pm(H_{X,\widehat{\gA}^{22}})=0$. Поскольку $\alpha_k^{22} = \diag(\alpha_{k,j})_{j=1}^{p_2}$ -- диагональные матрицы для всех $k\in\N$, то оператор $H_{X,\widehat{\gA}^{22}}$, в свою очередь, распадается в прямую сумму одномерных операторов $H_{X,\alpha_j}$, где $\alpha_j = \{\alpha_{k,j}\}_{k=1}^\infty$, $j\in\{1,...,p_{2}\}$. 

Предположим скачала, что выполнено условие \eqref{eq:DW}. Учитывая монотонность последовательности $\{d_k\}_{k=1}^\infty$ и неравенства $r_k \ge \sqrt{d_k}$ и $r_k \ge \sqrt{d_{k+1}}$,  из \eqref{eq:DW} находим 
\[
\sum_{k=1}^{\infty} |\alpha_{k,j} | d_k d_{k+1} r_kr_{k+1} \ge \sum_{k=1}^{\infty} |\alpha_{k,j} | d_{k+1}^3 =\infty.
\]
Согласно \cite[Предложение 5.11(i)]{KosMal10}, $n_{\pm}(H_{X,\alpha_j})=0$ для всех $j\in\{1,...,p_2\}$.

Предположим теперь, что справедливы неравенства \eqref{eq:Wouk} с некоторой постоянной $M>0$. Используя монотонность последовательности $\{d_k\}_{k=1}^\infty$, из неравенств \eqref{eq:Wouk} находим
\[
\alpha_{k,j} + \frac{1}{d_{k}}\left(1+\frac{r_k}{r_{k-1}}\right) + \frac{1}{d_{k+1}}\left(1+\frac{r_k}{r_{k+1}}\right) \le \alpha_{k,j} + \frac{4}{d_{k+1}} \le M d_{k+1} < M(d_k + d_{k+1}).
\]
Применяя \cite[Предложение 5.11(ii)]{KosMal10}, заключаем, что 
$n_{\pm}(H_{X,\alpha_j})=0$ для всех $j\in\{1,...,p_2\}$ и, значит, $n_\pm(\widehat{\Lambda}^{22})=0$.  Это завершает доказательство.
\end{proof}

\begin{example} Пусть $d_k=\frac{1}{k}$ и $\gA_k$ имеют диагональный вид, $\gA_k = \diag(\alpha_{k,j})_{j=1}^p$, для всех $k\in\N$. Пусть также
\[
\alpha_{k,j}  = - 2k - 1 + \mathcal{O}\left(k^{-1}\right),\quad j\in\{1,...,p_1\},
\]
и
\[
\alpha_{k,j}  = - 4k - 4 +\mathcal{O}\left(k^{-1}\right),\quad j\in\{p_1+1,...,p\}.
\]
Нетрудно видеть, что матрицы $\gA_k^{11} = \diag(\alpha_{k,j})_{j=1}^{p_1}$, $k\in\N$,  удовлетворяют \eqref{alpha}. Кроме того, для всех 
$j\in\{p_1+1,...,p\}$ имеем
\[
\frac{4}{d_{k+1}^2} + \frac{\alpha_{k,j}}{d_{k+1}} = 4(k+1)^2 - (k+1)(4k+4+ \mathcal{O}(k^{-1})) = \mathcal{O}(1)
\]
при $k\to\infty$.
Применяя Предложение \ref{VarIndices}, заключаем, что $n_{\pm}(H_{X,\gA}) = p_1$.
\end{example} 

\subsection{Спектральные свойства оператора $H_{X,\gA}$}\label{sec:spt}

Связь гамильтонианов $\eqref{difexpr}$ с якобиевыми матрицами \eqref{B2} простирается  значительно дальше равенства их индексов дефекта. Следующая теорема значительно расширяет список этих свойств. 

\begin{theorem} \label{MK} 
Пусть $B_{X,\gA}$ -- минимальный оператор, задаваемый матрицей \eqref{B2}. Пусть также $B_{X,\gA}= B_{X,\gA}^*$ (а значит и $H_{X,\gA}= H_{X,\gA}^*$). Тогда:
\begin{enumerate}
\item[(i)] операторы $H_{X,\gA}$ и $B_{X,\gA}$ полуограничены снизу лишь одновременно;
\item[(ii)] операторы $H_{X,\gA}$ и $B_{X,\gA}$ неотрицательны (положительно определены) лишь одновременно.
Более того,  
\be\label{eq:kappa}
\kappa_-(H_{X,\gA}) =\kappa_-(B_{X,\gA}).
\ee
 В частности, отрицательные части спектров операторов $H_{X,\gA}$ и $B_{X,\gA}$ конечны или бесконечны лишь одновременно;
\item[(iii)] для каждого $p\in (0,\infty]$ справедлива эквивалентность 
\[
E_{H_{X,\gA}}(\R_-)H_{X,\gA}\in\mathfrak{S}_p \ \Longleftrightarrow \ E_{B_{X,\gA}}(\R_-)B_{X,\gA} \in\mathfrak{S}_p.
\]
В частности, отрицательные части спектров операторов $H_{X,\gA}$ и $B_{X,\gA}$  дискретны лишь одновременно;
\item[(iv)]  $\sigma_{c}(H_{X,\gA}) \subset \mathbb{R}_+$ $(\sigma_{c}(H_{X,\gA}) \subseteq [0,\infty))$ в точности тогда, когда $\sigma_c\left(B_{X,\gA}\right) \subset \mathbb{R}_+$ $(\sigma_c\left(B_{X,\gA}\right) \subseteq [0,\infty))$;
\item[(v)] спектр оператора $H_{X,\gA}$ дискретен в точности тогда, когда $\lim\limits_{k\to \infty}d_k=0$ и спектр матрицы  $B_{X,\gA}$ дискретен;
 \item[(vi)] пусть $\widetilde{\gA}=\{\widetilde{\gA}_k\}$, где $\widetilde{\gA}_k=\widetilde{\gA}_k^* \in \mathbb{C}^{p\times p}$, $k\in \mathbb{N}$ и $B_{X,\widetilde{\gA}}$ -- минимальный оператор, ассоциированный с матрицей вида  \eqref{B2}, построенной по последовательности $\widetilde{\gA}$ вместо $\gA$. Если $H_{X,\widetilde{\gA}}=H_{X,\widetilde{\gA}}^*$, то для каждого $p\in (0,+\infty]$ справедлива эквивалентность:
 \[
 \begin{gathered}
  (H_{X,\gA}-\I)^{-1} - (H_{X,\widetilde{\gA}} - \I)^{-1}\in{\mathfrak S}_p  \
\Longleftrightarrow \
  (B_{X,\gA}-\I)^{-1} - (B_{X,\widetilde{\gA}}-\I)^{-1}\in{\mathfrak S}_p.
  \end{gathered}
  \]
\item[(vii)] eсли $\sum_{k=1}^{\infty}\frac{1}{d_k+d_{k+1}}\|\gA_k\|<\infty$, то абсолютно непрерывная часть $H^{\rm ac}_{X,\gA}$ оператора $H_{X,\gA}$ унитарно эквивалентна оператору $H_N$, где
      \[
      H_N=-\frac{\rd^2}{\rd x^2}\otimes \rI_p,\quad \dom( H_{N})=\{W^{2,2}(\mathbb{R}_+;\mathbb{C}^{p}):\, f'(0)=0\}.
      \]
\end{enumerate}
\end{theorem}

\begin{proof}
(i) Рассмотрим граничную тройку $\Pi$ для оператора $H_{\min}^*$, определенную в Предложении \ref{BT2}. Из \eqref{RQ2} и \eqref{Mk2} следует, что $M_k(0) =\bO$ для всех $k\in\N$ и, следовательно, из \eqref{Mk2} и \eqref{Mlim} получаем
$M(0) = \bigoplus_{k=1}^{\infty}M_k(0) =\bO_{\mathcal{H}}$. 
В частности, $M(0)\in[\mathcal{H}]$. Кроме того, 
\be\label{eq:H0F}
H_0:=H_{\min}^*\upharpoonright_{\ker(\Gamma_0)} = \bigoplus_{k\in\N} H_k^F,
\ee
где $H_k^F$ -- это фридрихсово расширение оператора $H_k$. Хорошо известно, что
\begin{equation*}
\begin{split}
H_{k}^F =& -\frac{\rd^2}{\rd x^2}\otimes \rI_p,\\
\dom (H_{k}^F) =& \big\{f \in W^{2,2}([x_{k-1},x_k];\mathbb{C}^p):\, f(x_{k-1}+)=f(x_k-)=0\big\}.
\end{split}
\end{equation*}
Следовательно, $H_0 = \oplus_{k\in\N} H_k^F$ -- фридрихсово расширение оператора $H_{\min}$ (см., например, \cite[Следствие 3.10]{MalN11}).
Теперь утверждение о полуограниченности снизу следует из Предложения \ref{BoundOp2}, Следствия \ref{WFConv} и Предложения \ref{prop_II.1.5_02}. 

(ii)   Утверждение о неотрицательности вытекает из Предложения \ref{SemBKappa}(i) с учетом равенства $M(0)=\bO_{\cH}$. 
В частности, равенство \eqref{eq:kappa} следует из Предложения \ref{SemBKappa}(ii). 
Чтобы доказать оставшуюся часть утверждения (ii), следует, согласно Предложению \ref{SemBKappa}(iii),  показать положительную определенность оператора $H_0$. Так как
\be\label{eq:sigma_k}
\sigma(H^F_{k}) = \left\{\frac{\pi^2j^2}{d_k^2}\right\}_{j\in\N},
\ee
то из определения \eqref{eq:H0F} имеем
\[
\inf \sigma(H_0) = \frac{\pi^2}{(d^*)^2}.
\]
Следовательно, оператор $H_0$ положительно определен в силу условия \eqref{eq:d*<}. 

(iii) Требуемая эквивалентность вытекает из Предложения \ref{SemBKappa}(iv).

(iv) Вытекает из утверждения (ii) при $p=\infty$, т. к. $\sigma_c(T)=\{0\}$ для каждого компактного оператора $T$.

(v) Из \eqref{eq:H0F} и \eqref{eq:sigma_k} с очевидностью заключаем, что спектр оператора $H_0$ дискретен в точности тогда, когда $\lim\limits_{k\to \infty}d_k=0$. 
Осталось воспользоваться Предложением \ref{L:3.2}(v).

(vi)  Вытекает из Предложения  \ref{BoundOp2} и Предложения \ref{L:3.2}(v). 

(vii)  Легко видеть, что $H_N = H_{X,\bO}$, то есть область определения оператора $H_N$ можно представить в виде \eqref{HB2} с $\Theta = \Theta_{X,\bO}$, в которой все $\gA_k = \bO_p$. Следовательно, для любого $f\in \ell^2_0(\N;\mathbb{C}^P)$ имеем
\[
B_{X,\gA}f - B_{X,\bO} f = \bigoplus_{k\in\N} \frac{1}{d_k + d_{k+1}}\gA_k f.
\]
Поэтому $\ol{B_{X,\gA} - B_{X,\bO}}\in \mathfrak{S}_1$ в точности тогда, когда $\sum_{k=1}^{\infty}\frac{1}{d_k+d_{k+1}}\|\gA_k\|<\infty$. 
В силу доказанного свойства (vi), заключаем 
\[
(H_{X,\gA}-\I)^{-1} - (H^N - \I)^{-1} \in \mathfrak{S}_{1}.
\]
Применяя теорему Бирмана--Крейна (см., например, \cite[Теорема XI.9]{RS}), получаем требуемое.
\end{proof}

\quad\\
 Авторы благодарны К. А. Мирзоеву за предоставленный препринт работы \cite{MirSaf16}.


\quad\\

\quad\\

\quad

{\large \bf Schr\"odinger operators with $\delta$-interactions in a space of vector-valued functions}\\

\quad

{\bf Aleksey Kostenko, Mark Malamud, and Daria Natiagailo}

\quad

{\bf{Summary:}} We study spectral properties of Schr\"odinger operators with $\delta$-inte\-rac\-tions on a semi-axis by using the theory of boundary triplets and the cor\-res\-ponding Weyl functions. We establish a connection between spectral properties (deficiency indices, self-adjointness, semiboundedness, discreteness of spectra, re\-sol\-vent comparability etc.) of Schr\"odinger operators with point interactions and a special class of block Jacobi matrices.

\quad

\end{document}